\newcommand{\fr}{\mathfrak}
\newcommand{\op}{\operatorname}
 \newtheorem{lemma} {Lemma} [section]
\newtheorem{theorem}[lemma]{Theorem} 
\newtheorem{remark}[lemma] {Remark} 
\newtheorem{prop} [lemma]{Proposition}  
\newtheorem{definition}[lemma] {Definition} 
\newtheorem{corol}[lemma] {Corollary} 
\newtheorem{example}[lemma] {Example} 
\newtheorem{question}[lemma] {Question}
\begin{document}
\title{Einstein Lie groups, geodesic orbit manifolds and regular Lie subgroups} 

\author{Nikolaos Panagiotis Souris}
\address{University of Patras, Department of Mathematics, University Campus, 26504, Rio Patras, Greece}
\email{nsouris@upatras.gr}

\begin{abstract}
We study the relation between two special classes of Riemannian Lie groups $G$ with a left-invariant metric $g$: The Einstein Lie groups, defined by the condition $\op{Ric}_g=cg$, and the geodesic orbit Lie groups, defined by the property that any geodesic is the integral curve of a Killing vector field. The main results imply that extensive classes of compact simple Einstein Lie groups $(G,g)$ are not geodesic orbit manifolds, thus providing large-scale answers to a relevant question of Y. Nikonorov.  Our approach involves studying and characterizing the $G\times K$-invariant geodesic orbit metrics on Lie groups $G$ for a wide class of subgroups $K$ that we call (weakly) regular. By-products of our work are structural and characterization results that are of independent interest for the classification problem of geodesic orbit manifolds.

\medskip
\noindent  {\it Mathematics Subject Classification 2020.} 53C25; 53C30. 

\medskip
\noindent {\it Keywords}:  Einstein Lie group; geodesic orbit manifold; geodesic orbit metric; naturally reductive manifold; regular Lie subgroup

\end{abstract}
\maketitle

\section{Introduction}
Two notable and widely studied classes of Riemannian manifolds $(M,g)$ are the \emph{Einstein manifolds} (\cite{Bes}) and the \emph{geodesic orbit manifolds}, or \emph{g.o. manifolds} (\cite{BerNik0}).  The former are defined by the condition $\op{Ric}_g=cg$, $c\in \mathbb R$, where $\op{Ric}_g$ is the Ricci tensor. The latter are defined by the property that any geodesic is the integral curve of a complete Killing vector field (Definition \ref{defg.o.}). One of the largest and most important proper subclasses of g.o. manifolds are the \emph{naturally reductive manifolds} (Definition \ref{defnat}), in which case the metric $g$ is essentially induced from a bi-invariant (pseudo-)Riemannian metric of a Lie group $G$ acting transitively and isometrically on the manifold (\cite{DaZi}, \cite{Kos}).      

The classes of Einstein manifolds and g.o. manifolds have an extensive intersection. For example, irreducible \emph{Riemannian symmetric spaces} (and more generally, \emph{isotropy irreducible spaces} \cite{WaZi}) are both Einstein and g.o manifolds. Furthermore, D'Atri and Ziller discovered large families of compact Einstein Lie groups $(G,g)$, where $g$ is a left-invariant metric, that are also naturally reductive  (\cite{DaZi}) and hence geodesic orbit. On the other hand, several aspects of the relation between Einstein Lie groups and g.o. manifolds remain unknown.  A relevant well-known question, stated by D'Atri and Ziller in 1979, can be reformulated as follows.

\begin{question}\label{problem1} \emph{(\cite{DaZi})} Which compact Einstein Lie groups $(G,g)$ are not naturally reductive? \end{question}

\noindent Question \ref{problem1} has attracted substantial interest, resulting in the discovery of a plethora of left-invariant, non-naturally reductive Einstein metrics on compact simple Lie groups, mainly in the works \cite{ArMo}, \cite{ArSaSt}, \cite{ArSaSt1}, \cite{CheChe}, \cite{CheCheArx}, \cite{CheCheDe}, \cite{CheCheDe3}, \cite{CheLi}, \cite{ChSa}, \cite{Mori}, \cite{YanDen}, \cite{ZaBo}, \cite{ZaCheDe}, \cite{ZaCheDe1}. Recently, Nikonorov stated the following natural generalization of Question \ref{problem1}.

\begin{question}\label{problem2}\emph{(\cite{NikEin})} Which compact Einstein Lie groups $(G,g)$ are not geodesic orbit? \end{question}

\noindent Nikonorov also provided the first answer to Question \ref{problem2}, by obtaining a left-invariant Einstein metric on the compact Lie group $G_2$ that is not geodesic orbit.  Further answers were given in the works \cite{CheCheDe2} and \cite{XuTa}. In the former work it is proved that any of the non-naturally reductive Einstein Lie groups in \cite{CheCheDe}, \cite{CheLi} and \cite{ZaCheDe} is not geodesic orbit.  Question \ref{problem2} is generally more complicated than Question \ref{problem1}, while the approach used so far mainly involves case-by-case examination for suitably chosen non-naturally reductive Einstein metrics in literature.
  
In this paper, we study Question \ref{problem2} from a new perspective by looking at the local structure of the isometry groups of the Riemannian Lie groups $(G,g)$. As follows from the Ochiai-Takahashi Theorem, the connected isometry group of a compact simple Lie group $(G,g)$ is locally isomorphic to a product $G\times K$, where $K$ is a closed connected subgroup of $G$ (\cite{Ot}, \cite{DaZi}).  More generally, a Riemannian metric $g$ on $G$ is called \emph{$G\times K$-invariant} if it is invariant by the left translations in $G$ and the right translations in $K$. At the heart of our study lies a class of Lie subgroups $K$ of $G$ that we call regular, based on the definition of a regular subalgebra in the sense of Dynkin (\cite{Dyn}, \cite{DieGr}).

\begin{definition}\label{defregular} A Lie subgroup $K$ of $G$ is called regular if the Lie algebra $\fr{k}$ of $K$ is normalized by a Cartan subalgebra of the Lie algebra $\fr{g}$ of $G$.  Equivalently, $K$ is called regular if there exists a Cartan subalgebra $\fr{t}$ of $\fr{g}$ such that $[\fr{k},\fr{t}]\subseteq \fr{k}$.
\end{definition}

Regular subgroups comprise an extensive class that includes the subgroups of maximal rank and the abelian subgroups. Essentially, regular subgroups are (up to local isomorphism) the normal subgroups of all subgroups of $G$ that have maximal rank. Before we state our main result, a Riemannian Lie group $(G,g)$ will be called $G\times K$-geodesic orbit if any geodesic in $G$ is the orbit of an one parameter subgroup of $G\times K$ (see also Definition \ref{defg.o.}).

\begin{theorem}\label{consequence}Let $(G,g)$ be a connected compact Riemannian simple Lie group and assume that the metric $g$ is $G\times K$-invariant, where $K$ is a closed regular subgroup of $G$.  If $(G,g)$ is a $G\times K$-geodesic orbit manifold then $(G,g)$ is a naturally reductive manifold.  \end{theorem}   

\begin{corol}\label{consequence1}Let $(G,g)$ be a connected compact Riemannian simple Lie group and assume that the metric $g$ is $G\times K$-invariant, where $K$ is subgroup of $G$ that has maximal rank. Then $(G,g)$ is a g.o. manifold if and only if it is a naturally reductive manifold.\end{corol}

Theorem \ref{consequence} reduces the study of Question \ref{problem2} to the much simpler Question \ref{problem1} for extensive classes of Riemannian Lie groups.  This allows us to provide the following large-scale answer to Question \ref{problem2}.

\begin{theorem}\label{theoremlit1} Let $(G,g)$ be any (non-naturally reductive) Einstein Lie group obtained in \cite{ArMo}, \cite{ArSaSt1}, \cite{CheChe}, \cite{CheCheArx}, \cite{ChSa}, \cite{Mori} and \cite{ZaCheDe1}, or any non-naturally reductive Einstein Lie group induced from one of the standard triples $(\fr{sp}(2n_1n_2), \fr{sp}(n_1n_2)\times \fr{sp}(n_1n_2),2n_2\fr{sp}(n_1))$, $(\fr{f}_4,\fr{so}(9),\fr{so}(8))$, $(\fr{e}_8,\fr{so}(16),8\fr{su}(2))$, $(\fr{e}_8,\fr{so}(16),2\fr{so}(8))$ in \cite{YanDen}. Then $(G,g)$ is not a geodesic orbit manifold.    \end{theorem}

Theorem \ref{theoremlit1} can be extended to include additional Einstein Lie groups, by generalizing the notion of a regular subgroup to the more technical notion of a \emph{weakly regular subgroup} (Section \ref{weaklyregular}).  In particular, using the results of Section \ref{weaklyregular}, we obtain the following.

\begin{theorem}\label{extend}Let $(G,g)$ be any (non-naturally reductive) Einstein Lie group obtained in \cite{ArSaSt}, \cite{CheCheDe3}, \cite{ZaBo} or any non-naturally reductive Einstein Lie group induced from an irreducible triple $(\fr{g},\fr{k}, \fr{h})$ in \cite{YanDen}, with the possible exception of the triple $(\fr{so}(8),\fr{so}(7),\fr{g}_2)$.  Then $(G,g)$ is not a geodesic orbit manifold.\end{theorem}

Theorems \ref{theoremlit1} and \ref{extend} imply that the vast majority of non-naturally reductive Einstein Lie groups in the literature are not geodesic orbit manifolds.

\begin{remark}\label{remarkackn1} There exist non-naturally reductive Einstein metrics that are geodesic orbit.  For example, the flag manifolds $SO(2l)/U(l)$ and $Sp(l)/(U(1)\times Sp(l-1))$ respectively admit $SO(2l)$, $Sp(l)$-invariant Einstein metrics (see for example \cite{ArChr}).  Those metrics are geodesic orbit but in no way naturally reductive (\cite{AlAr}).\end{remark}

Apart from its application in Einstein Lie groups, Theorem \ref{consequence} is of independent interest for the problem of characterization and classification of g.o. manifolds, which has recently attracted significant interest (see \cite{BerNik0} and references therein). For example, the following corollary of Theorem \ref{consequence} characterizes the left-invariant geodesic orbit metrics on $SU(2)$.

\begin{corol}\label{Emil}Let $g$ be a left-invariant metric on a three dimensional compact simple Lie group $G$ (e.g. $G=SU(2)$).  Then $(G,g)$ is a geodesic orbit manifold if and only if it is a naturally reductive manifold.\end{corol} 

We note that Corollary \ref{Emil} is well - known.  Indeed, O. Kowalski and L. Vanhecke showed that all geodesic orbit Riemannian spaces $(M,g)$ with $\dim M\leq 5$ are naturally reductive (\cite{KoVa}).  More specifically, since $SU(2)$ is diffeomorphic to the sphere $S^3$, the conclusion of Lemma \ref{Emil} for $(SU(2),g)$ can also be verified from the classification of geodesic orbit Riemannian metrics on spheres in \cite{NikSphe}.   

\subsection{Overview of the proof and additional characterization results}

 To prove Theorem \ref{consequence}, we firstly show that if $K$ is a connected regular subgroup of a compact Lie group $G$ then any $G\times K$-geodesic orbit metric on $G$ splits into the sum of a bi-invariant metric on the Lie group $N^0(K)$ and a $G$-geodesic orbit metric on the homogeneous space $G/N^0(K)$, where $N^0(K)$ denotes the identity component of the normalizer of $K$ in $G$.

\begin{theorem}\label{split}Let $G$ be a compact Lie group with Lie algebra $\fr{g}$ and let $K$ be a connected regular subgroup of $G$ with Lie algebra $\fr{k}$.  Consider an orthogonal decomposition $\fr{g}=\fr{n}_{\fr{g}}(\fr{k})\oplus \fr{p}$ with respect to an $\op{Ad}$-invariant inner product on $\fr{g}$, where $\fr{n}_{\fr{g}}(\fr{k})$ is the normalizer of $\fr{k}$ in $\fr{g}$.  Then any $G\times K$-g.o. metric $g$ on $G$ splits as 

\begin{equation*}g=\left.g(\cdot \ , \cdot)\right|_{\fr{n}_{\fr{g}}(\fr{k})\times \fr{n}_{\fr{g}}(\fr{k})}+\left.g(\cdot \ , \cdot)\right|_{\fr{p}\times \fr{p}},\end{equation*}

\noindent where the restriction $\left.g(\cdot \ , \cdot)\right|_{\fr{n}_{\fr{g}}(\fr{k})\times \fr{n}_{\fr{g}}(\fr{k})}$ defines a bi-invariant metric on the Lie group $N^0(K)$ and the restriction $\left.g(\cdot \ , \cdot)\right|_{\fr{p}\times \fr{p}}$ defines a $G$-g.o. metric on the homogeneous space $G/N^0(K)$. \end{theorem}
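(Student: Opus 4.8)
The plan is to translate everything into a single operator and then into the geodesic orbit equation. Since $g$ is left-$G$-invariant it is determined by the inner product $\langle\cdot,\cdot\rangle:=g_e$ on $\fr g$, and right-invariance under $K$ is equivalent to $\langle\cdot,\cdot\rangle$ being $\Ad(K)$-invariant. Write $\langle X,Y\rangle=\langle\varphi X,Y\rangle_0$ for the unique positive $\langle\cdot,\cdot\rangle_0$-self-adjoint operator $\varphi$; since both products are $\Ad(K)$-invariant, $\varphi$ is $\Ad(K)$-equivariant. First I would record that the desired splitting is equivalent to the single statement $\varphi(\fr k)\subseteq\fr k$: because $\varphi$ is $\langle\cdot,\cdot\rangle_0$-self-adjoint and $\fr m=\fr k^{\perp_0}$, this forces $\varphi(\fr m)\subseteq\fr m$ as well, hence $\langle\fr k,\fr m\rangle=0$; conversely vanishing of the cross terms gives $\varphi(\fr k)\subseteq\fr k$. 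Granting $\varphi(\fr k)\subseteq\fr k$, the restriction $\langle\cdot,\cdot\rangle|_{\fr k}$ is an $\Ad(K)$-invariant product on the compact group $K$, hence bi-invariant, and $\langle\cdot,\cdot\rangle|_{\fr m}$ is an $\Ad(K)$-invariant product on $\fr m\cong T_{eK}(G/K)$, hence a $G$-invariant metric. Next, the connected isometry group acts on $G$ as $G\times K$ with isotropy subgroup $\Delta K$ at $e$, and $\fr g\oplus 0$ is an $\Ad(\Delta K)$-invariant complement of $\Delta\fr k$ in $\fr g\oplus\fr k$, identified with $T_eG=\fr g$; feeding this reductive decomposition into the standard geodesic orbit equation shows that $g$ is a g.o.\ metric if and only if
\begin{equation*}
\text{for every }X\in\fr g\ \text{there is }W\in\fr k\ \text{with}\quad [W+X,\varphi X]=0 .
\end{equation*}
Applied with $X\in\fr m$ and using $\varphi(\fr m)\subseteq\fr m$, the $\fr m$-component of $[W+X,\varphi X]=0$ is precisely the geodesic orbit equation of $(G/K,\langle\cdot,\cdot\rangle|_{\fr m})$ at $X$; so the whole theorem reduces to proving $\varphi(\fr k)\subseteq\fr k$.

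To prove $\varphi(\fr k)\subseteq\fr k$ I would decompose $\fr g$ into $\Ad(K)$-isotypic summands; by Schur's lemma $\varphi$ preserves each of them, so it suffices to control the \emph{mixed} summands, those meeting both $\fr k$ and $\fr m$. The irreducible constituents of the $\Ad(K)$-module $\fr k=\fr z(\fr k)\oplus\fr k_1\oplus\cdots\oplus\fr k_p$ are the trivial module and the pairwise non-isomorphic adjoint modules $\fr k_i$ of the simple ideals, so a mixed summand is either the trivial one or is isomorphic as an $\Ad(K)$-module to the adjoint module of some $\fr k_i$; writing $\fr g^{(i)}$ for the corresponding isotypic summand, $\fr g^{(i)}\cap\fr k=\fr k_i$, since $\fr k_i$ occurs in $\fr k$ with multiplicity one. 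The trivial summand is dispatched by the hypotheses: in case (i) $\fr k$ is semisimple, so $\fr z(\fr k)=0$ and $\fr k$ contributes nothing to it; in case (ii) $\fr z_{\fr g}(\fr k)\subseteq N_{\fr g}(\fr k)=\fr k$, so $\fr m$ contains no nonzero $\Ad(K)$-fixed vector and again there is no mixing (in particular regular $K$ of maximal rank fall under (ii)). It remains to treat the adjoint summands $\fr g^{(i)}$.

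The adjoint summands are the heart of the matter, and the only place where both the geodesic orbit equation and the (weak) regularity of $K$ are essential: pure representation theory fails here, because $\fr m$ can genuinely contain $\Ad(K)$-submodules isomorphic to $\fr k_i$ --- the simplest instance being a short-root $\SU(2)\subseteq\Sp(2)$, where $\fr m$ carries two copies of the adjoint module of $\fr k\cong\fr{su}(2)$. On such a summand, identify $\fr g^{(i)}$ with $\fr k_i\otimes\R^{\,1+r_i}$, the factor attached to $\R e_1$ being the ideal $\fr k_i\subseteq\fr k$, so that $\ad(\fr k_i)$ acts as $\ad_{\fr k_i}\otimes\op{id}$ and $\varphi|_{\fr g^{(i)}}=\op{id}_{\fr k_i}\otimes M_i$ with $M_i$ positive and self-adjoint on the multiplicity space; the goal is that $M_i$ preserves $\R e_1$. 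I would invoke Lemma \ref{key} --- the favourable representation-theoretic behaviour of (weakly) regular subgroups --- to pin down the shape of the Lie bracket of $\fr g$ restricted to $\fr g^{(i)}$, which by equivariance is assembled from the $\fr k_i$-invariant maps $\fr k_i\otimes\fr k_i\to\fr k_i$ (the bracket, and, when it exists, the totally symmetric invariant) together with bilinear operations on $\R^{\,1+r_i}$ having $e_1$ as a unit. One then tests the geodesic orbit equation at vectors $X=X_1+X_2$ with $X_1,X_2$ eigenvectors of $\varphi|_{\fr g^{(i)}}$ for distinct eigenvalues $\mu_1,\mu_2$ (the equation being vacuous at eigenvectors), which reduces it to solvability of $\mu_1[W,X_1]+\mu_2[W,X_2]=(\mu_1-\mu_2)[X_1,X_2]$ with $W\in\fr k$; analysing the $\fr g^{(i)}$-component of this identity with $X_1$ in the copy $\fr k_i$ and $X_2$ in an $\fr m$-copy forces the off-diagonal entries of $M_i$ between $\R e_1$ and the $\fr m$-copies to vanish, i.e.\ $\varphi(\fr k_i)\subseteq\fr k_i$. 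Summing over $i$ gives $\varphi(\fr k)\subseteq\fr k$, hence the theorem. The main obstacle is exactly this last step; it is most delicate for simple ideals $\fr k_i$ that carry a nonzero totally symmetric invariant (such as $\fr{su}(n)$, $n\ge 3$), where the multiplicity-space algebra is larger and the bookkeeping in the geodesic orbit equation becomes correspondingly heavier.
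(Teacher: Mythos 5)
Your reduction of the theorem to the single inclusion $\varphi(\fr{k})\subseteq\fr{k}$ is correct and matches the logical skeleton of the paper's argument (the paper's Proposition \ref{GoConditionBasic} is exactly your $G\times K$ geodesic-orbit equation, and the final deduction that $\left.\Lambda\right|_{\fr{m}}$ is a g.o. metric on $G/K$ is the same). The trivial isotypic summand is also handled correctly under hypotheses (i) and (ii). The gap is in the step you yourself flag as "the main obstacle": the mixed summands of adjoint type. There you only describe a plan — decompose $\fr{g}^{(i)}\cong\fr{k}_i\otimes\R^{1+r_i}$, constrain the bracket by invariant theory, test the g.o. equation at sums of eigenvectors, and hope the bookkeeping forces $M_i e_1\in\R e_1$ — and this plan is never executed. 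Moreover, the way you propose to bring in Lemma \ref{key} does not work as stated: that lemma asserts inequivalence of $\op{ad}_{\fr{n}_{\fr{g}}(\fr{k})}$-submodules of $\fr{k}$ and of $\fr{p}$, not of $\op{ad}_{\fr{k}}$-submodules, and it says nothing about "the shape of the Lie bracket" on an $\op{Ad}(K)$-isotypic block. In your own example (short-root $\SU(2)\subseteq\Sp(2)$) the two adjoint copies in $\fr{m}$ really are $\op{ad}_{\fr{k}}$-equivalent to $\fr{k}$, so no amount of $\op{Ad}(K)$-representation theory alone separates them; the extra symmetry that does separate them is the action of the centralizer $\fr{c}_{\fr{m}}(\fr{k})$.

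That is precisely the ingredient your proposal is missing and the paper supplies: one first proves (the paper's Lemma \ref{NormalizerLemmaNew}) that for a $G\times K$-invariant g.o. metric the operator $\Lambda$ is equivariant not just for $\op{ad}_{\fr{k}}$ but for $\op{ad}_{\fr{n}_{\fr{g}}(\fr{k})}$, where $\fr{n}_{\fr{g}}(\fr{k})=\fr{k}\oplus\fr{c}_{\fr{m}}(\fr{k})$. This is a genuinely nontrivial eigenvector argument using the g.o. equation at vectors of the form $X+Y'$ with $Y'\in\fr{c}_{\fr{m}}(\fr{k})$ — it is the place where the geodesic-orbit hypothesis actually enters. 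Once $\Lambda$ is $\op{ad}_{\fr{n}_{\fr{g}}(\fr{k})}$-equivariant, weak regularity (Lemma \ref{key}) says $\fr{k}$ and $\fr{p}$ share no equivalent nonzero $\op{ad}_{\fr{n}_{\fr{g}}(\fr{k})}$-submodules, so by Schur (Lemma \ref{equiv}) the mixed summands you were worried about simply do not exist at the level of $\op{ad}_{\fr{n}_{\fr{g}}(\fr{k})}$-isotypic components, and $\Lambda\fr{k}\subseteq\fr{k}$ follows with no bracket bookkeeping at all. In the self-normalizing case (ii) your approach already closes (there $\fr{n}_{\fr{g}}(\fr{k})=\fr{k}$ and weak regularity kills the mixed summands directly), but in case (i) with $\fr{c}_{\fr{m}}(\fr{k})\neq 0$ your argument as written is incomplete, and completing it along your proposed route would essentially require reproving the normalizer equivariance in disguise.
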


Although regular subgroups comprise a wide class that admits diverse embeddings in $G$, they exhibit a consistent representation-theoretic property (Lemma \ref{key}), a result of which is Theorem \ref{split}.  It is that specific property that we use to extend the definition of a regular subgroup to that of a \emph{weakly regular subgroup} (Definition \ref{weaklyregulardef}).  \\

It should be mentioned that the property to be geodesic orbit implies additional symmetries. For example, we have the following lemma that we restate and prove in Section \ref{sec2} (Lemma \ref{NormalizerLemmaNew}). 

\begin{lemma}
Let $G$ be a compact Lie group with Lie algebra $\fr{g}$ and let $g$ be a $G\times K$-geodesic orbit metric on $G$. Denote by $\fr{k}$ the Lie algebra of $K$.  Then $g$ is not only $\op{ad}_{\fr{k}}$-equivariant, but also $\op{ad}_{\fr{n}_{\fr{g}}(\fr{k})}$-equivariant, where $\fr{n}_{\fr{g}}(\fr{k})$ denotes the normalizer of $\fr{k}$ in $\fr{g}$.\end{lemma}

A corollary of the above result is the well-known fact that any $G$-g.o. metric on a compact Lie group $G$ is necessarily bi-invariant (\cite{AlNik}).  Another consequence of the above lemma is the following.

 \begin{corol}\label{yields}Let $(G,g)$ be a compact simple Riemannian Lie group where $g$ is a left-invariant geodesic orbit metric. Let $\fr{g}\times \fr{k}$ be the Lie algebra of the isometry group of $(G,g)$.  Then $\fr{k}$ is a self-normalizing subalgebra of $\fr{g}$.\end{corol}

Finally, we mention a characterization result that we prove in Section \ref{weaklyregular}, using the notion of a weakly regular subgroup. 

\begin{prop}\label{isotropcorol} Let $(G,g)$ be a connected compact Riemannian simple Lie group and assume that the metric $g$ is $G\times K$-invariant, where $K$ is a subgroup of $G$ such that $G/K$ is strongly isotropy irreducible (i.e. the identity component of $K$ acts irreducibly on the tangent space of $G/K$ at the origin).   Then $(G,g)$ is a g.o. manifold if and only if it is a naturally reductive manifold.  \end{prop}

\subsection{Structure of the paper}
In Section \ref{sec1}, we present some preliminaries on Lie groups, homogeneous spaces and invariant metrics. In Section \ref{sec2}, we recall the notions of a geodesic orbit manifold and of a naturally reductive manifold, as well as some fundamental properties of geodesic orbit manifolds.  We also establish properties of $G\times K$-geodesic orbit metrics on compact Lie groups.  In Section \ref{sec3}, we discuss properties of regular subgroups. We prove a key representation-theoretic property (Lemma \ref{key}). We also prove Theorem \ref{split} and a useful corollary (Corollary \ref{ulcorol}).  Using the aforementioned corollary, in Section \ref{sec4.5} we prove the main result, Theorem \ref{consequence}.  In Section \ref{sec8}, we apply the main theorem to prove Corollary \ref{consequence1}, Theorem \ref{theoremlit1} and Corollary \ref{Emil}. Finally, in Section \ref{weaklyregular} we introduce the notion of a weakly regular subgroup and we prove Theorem \ref{extend}.\\

\noindent 
{\bf Acknowledgement.} The author had useful discussions with Sigmundur Gudmundsson, Ramiro Lafuente, Emilio Lauret and Jorge Lauret during the \emph{virtual seminar on geometry with symmetries}. Those discussions contributed to the overall improvement of the exposition, including Remark \ref{remarkackn1} and Corollary \ref{Emil}.  The author would also like to thank the anonymous referee for the improvement of the manuscript, including a simpler proof of Lemma 3.15.

\section{Preliminaries on invariant metrics on Lie groups and homogeneous spaces}\label{sec1}

\subsection{Left-invariant and $G\times K$-invariant metrics}  Let $G$ be a Lie group with Lie algebra $\fr{g}$. Denote by $\op{Ad}:G\rightarrow \op{Aut}(\fr{g})$ and $\op{ad}:\fr{g}\rightarrow \op{End}(\fr{g})$ the adjoint representations of $G$ and $\fr{g}$ respectively, where $\op{ad}_XY=[X,Y]$, $X,Y\in \fr{g}$.  We assume that $G$ is compact.  In this case, there exists an $\op{Ad}$-invariant inner product $Q$ on $\fr{g}$, which we fix. If $G$ is simple, the only $\op{Ad}$-invariant inner product (up to scaling) is the negative of the Killing form of $\fr{g}$.

  A Riemannian metric $g$ on $G$ is called \emph{left-invariant} if the left translations in $G$ are isometries of the manifold $(G,g)$.  A left-invariant metric is called \emph{bi-invariant} if the right translations in $G$ are also isometries of $(G,g)$.  The left-invariant Riemannian metrics $g$ are in bijection with inner products $\langle \ ,\ \rangle$ on $\fr{g}$ which in turn are in bijection with \emph{metric endomorphisms} $\Lambda=\Lambda_Q\in \op{End}(\fr{g})$ satisfying

\begin{equation}\label{metend}\langle X,Y\rangle = Q(\Lambda X,Y), \ \ X,Y\in \fr{g}.\end{equation}

\noindent  The Lie algebra $\fr{g}$ admits a $Q$-orthogonal decomposition $\fr{g}=\bigoplus_{j=1}^s\fr{g}_{\lambda_j}$, where $\fr{g}_{\lambda_j}$ are the distinct eigenspaces of $\Lambda$ with corresponding eigenvalues $\lambda_j>0$.  Without any loss of generality, we will make no distinction between a left-invariant metric $g$ and its corresponding metric endomorphism $\Lambda\in \op{End}(\fr{g})$.

Let $K$ be a closed subgroup of $G$ with Lie algebra $\fr{k}$.  The Lie group $G\times K$ acts transitively on $G$ by $(x,y)\cdot z:=xzy^{-1}$ where $x,z\in G$, $y\in K$.  A Riemannian metric $g$ on $G$ is called \emph{$G\times K$-invariant} if it is invariant by the aforementioned action of $G\times K$ on $G$. Equivalently, $g$ is a left-invariant metric that is also invariant by the right translations in $K$. Under this notation, bi-invariant metrics correspond to $G\times G$-invariant metrics.  The corresponding inner product $\langle \ ,\ \rangle$ of a $G\times K$-invariant metric is $\op{Ad}_K$-invariant (or equivalently any endomorphism $\op{ad}_{X}$, $X\in \fr{k}$, is skew-symmetric with respect to $\langle \ ,\ \rangle$ if $K$ is connected).    More generally, taking into account Equation \eqref{metend} and applying simple arguments from linear algebra, we obtain the following criterion.

\begin{lemma}\label{eigenspaceinvariant}Let $g$ be a left-invariant metric on $G$ with corresponding inner product $\langle \ ,\ \rangle$ and let $K$ be a closed subgroup of $G$.  The following are equivalent.\\
\noindent \emph{(i)} The metric $g$ is $G\times K$-invariant.\\
\noindent \emph{(ii)} The metric endomorphism $\Lambda$ of $g$ is $\op{Ad}_K$-equivariant, i.e. $\op{Ad}_k\circ \Lambda=\Lambda\circ \op{Ad}_k$ for all $k\in K$.\\
\noindent \emph{(iii)} Every eigenspace $\fr{g}_{\lambda}$ of $\Lambda$ in $\fr{g}$ is $\op{Ad}_K$-invariant.\end{lemma}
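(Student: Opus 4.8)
The plan is to reduce everything to linear algebra on $\fr g$ via the correspondence \eqref{metend}, after first recording the translation of $G\times K$-invariance into an algebraic condition on the inner product. As noted in the discussion preceding the lemma, a left-invariant metric $g$ is right-invariant by $K$ — i.e.\ $G\times K$-invariant — if and only if its inner product $\langle\ ,\ \rangle$ on $\fr g$ is $\op{Ad}_K$-invariant: $\langle\op{Ad}_kX,\op{Ad}_kY\rangle=\langle X,Y\rangle$ for all $k\in K$ and $X,Y\in\fr g$. Indeed, the differential at the identity of $z\mapsto kzk^{-1}$ is $\op{Ad}_k$, so after using left-invariance to transport the $G\times K$-action $(x,y)\cdot z=xzy^{-1}$ to $T_eG=\fr g$, invariance of $g$ becomes exactly $\op{Ad}_K$-invariance of $\langle\ ,\ \rangle$. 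I would simply invoke this, since the paper already states it.

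Next I would prove (i)$\Leftrightarrow$(ii). Using \eqref{metend} and the fact that $Q$ is $\op{Ad}$-invariant (hence $\op{Ad}_K$-invariant, as $G$ is compact), for $k\in K$ and $X,Y\in\fr g$ one computes $\langle\op{Ad}_kX,\op{Ad}_kY\rangle=Q(\Lambda\op{Ad}_kX,\op{Ad}_kY)=Q(\op{Ad}_{k^{-1}}\Lambda\op{Ad}_kX,Y)$. Since $Q$ is nondegenerate and $X,Y$ are arbitrary, $\langle\ ,\ \rangle$ is $\op{Ad}_K$-invariant if and only if $\op{Ad}_{k^{-1}}\Lambda\op{Ad}_k=\Lambda$ for every $k\in K$, i.e.\ $\op{Ad}_k\circ\Lambda=\Lambda\circ\op{Ad}_k$ for all $k\in K$. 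Combined with the reduction of the previous paragraph, this gives (i)$\Leftrightarrow$(ii). (The interchange of $k$ and $k^{-1}$ is immaterial, as $K$ is a group.)

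Finally I would prove (ii)$\Leftrightarrow$(iii). Observe first that $\Lambda$ is $Q$-self-adjoint, since both $\langle\ ,\ \rangle$ and $Q$ are symmetric; hence $\Lambda$ is diagonalizable with the $Q$-orthogonal eigenspace decomposition $\fr g=\bigoplus_{j=1}^s\fr g_{\lambda_j}$ recalled in the text, and $\Lambda|_{\fr g_{\lambda_j}}=\lambda_j\op{Id}$. If (ii) holds and $X\in\fr g_{\lambda_j}$, then $\Lambda\op{Ad}_kX=\op{Ad}_k\Lambda X=\lambda_j\op{Ad}_kX$, so $\op{Ad}_kX\in\fr g_{\lambda_j}$, which is (iii). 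Conversely, if every $\fr g_{\lambda_j}$ is $\op{Ad}_K$-invariant, then for each $k\in K$ both $\Lambda$ and $\op{Ad}_k$ preserve the decomposition $\fr g=\bigoplus_j\fr g_{\lambda_j}$, and on each summand $\Lambda$ is the scalar $\lambda_j$, which commutes with $\op{Ad}_k|_{\fr g_{\lambda_j}}$; summing over $j$ shows $\op{Ad}_k\circ\Lambda=\Lambda\circ\op{Ad}_k$, i.e.\ (ii).

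There is essentially no serious obstacle here: the argument is bookkeeping with $\op{Ad}$-invariance of $Q$, self-adjointness of $\Lambda$, and the eigenspace decomposition. The only point that genuinely needs to be pinned down is the equivalence, in the first paragraph, between $G\times K$-invariance of $g$ and $\op{Ad}_K$-invariance of $\langle\ ,\ \rangle$, which the paper has already established and which I would cite rather than re-derive.
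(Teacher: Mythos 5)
Your proof is correct and follows exactly the route the paper intends: the paper states this lemma without proof, noting only that it follows from Equation \eqref{metend} and ``simple arguments from linear algebra,'' and your argument (reducing $G\times K$-invariance to $\op{Ad}_K$-invariance of $\langle\ ,\ \rangle$, using the $\op{Ad}$-invariance and nondegeneracy of $Q$ for (i)$\Leftrightarrow$(ii), and the $Q$-orthogonal eigenspace decomposition of the self-adjoint operator $\Lambda$ for (ii)$\Leftrightarrow$(iii)) is precisely that omitted bookkeeping, carried out correctly.
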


\noindent The following is the Lie-algebraic version of Lemma \ref{eigenspaceinvariant}.

\begin{lemma}\label{eigenspaceinvariant1}Let $g$ be a $G\times K$-invariant metric on $G$ with corresponding inner product $\langle \ ,\ \rangle$ and let $\fr{k}$ be the Lie algebra of $K$.  Then the following equivalent relations are true.\\
\noindent \emph{(iv}) Any endomorphism $\op{ad}_{X}$, $X\in \fr{k}$, is skew-symmetric with respect to $\langle \ ,\ \rangle$.\\
\noindent \emph{(v)} The metric endomorphism $\Lambda$ of $g$ is $\op{ad}_{\fr{k}}$-equivariant, i.e. $\op{ad}_X\circ \Lambda=\Lambda\circ \op{ad}_X$ for all $X\in \fr{k}$.\\
\noindent \emph{(vi)} Every eigenspace $\fr{g}_{\lambda}$ of $\Lambda$ in $\fr{g}$ is $\op{ad}_{\fr{k}}$-invariant.\end{lemma}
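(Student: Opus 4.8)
The plan is to observe that this statement is the infinitesimal counterpart of Lemma \ref{eigenspaceinvariant} and to derive it either directly from that lemma (using connectedness, or rather the fact that the relevant conditions only depend on the Lie algebra $\fr{k}$) or by an independent linear-algebra argument that parallels the proof of Lemma \ref{eigenspaceinvariant}. I would present the three implications (iv) $\Rightarrow$ (v) $\Rightarrow$ (vi) $\Rightarrow$ (iv) to establish the equivalence, and then separately note that these hold under the hypothesis that $g$ is $G\times K$-invariant.

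First I would unwind the definitions. Since $g$ is $G\times K$-invariant, its inner product $\langle\ ,\ \rangle$ is $\op{Ad}_{K_0}$-invariant, where $K_0$ is the identity component of $K$; differentiating the identity $\langle \op{Ad}_{\exp(tX)}Y, \op{Ad}_{\exp(tX)}Z\rangle = \langle Y,Z\rangle$ at $t=0$ for $X\in\fr{k}$ gives $\langle \op{ad}_X Y, Z\rangle + \langle Y, \op{ad}_X Z\rangle = 0$, which is (iv). For (iv) $\Rightarrow$ (v): by \eqref{metend} we have $\langle X', Y'\rangle = Q(\Lambda X', Y')$, and since $Q$ is itself $\op{Ad}$-invariant the same differentiation shows $\op{ad}_X$ is $Q$-skew-symmetric for every $X\in\fr{g}$; combining $Q(\op{ad}_X \Lambda Y', Z') = -Q(\Lambda Y', \op{ad}_X Z') = -\langle Y', \op{ad}_X Z'\rangle = \langle \op{ad}_X Y', Z'\rangle = Q(\Lambda\, \op{ad}_X Y', Z')$ for all $Z'$ and using nondegeneracy of $Q$ yields $\op{ad}_X\circ\Lambda = \Lambda\circ\op{ad}_X$. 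For (v) $\Rightarrow$ (vi): if $\op{ad}_X$ commutes with $\Lambda$, then $\op{ad}_X$ preserves each eigenspace $\fr{g}_\lambda = \ker(\Lambda - \lambda\,\Id)$, since for $V\in\fr{g}_\lambda$ we get $\Lambda\,\op{ad}_X V = \op{ad}_X \Lambda V = \lambda\,\op{ad}_X V$. For (vi) $\Rightarrow$ (iv): using the $Q$-orthogonal decomposition $\fr{g} = \bigoplus_j \fr{g}_{\lambda_j}$ into $\op{ad}_{\fr{k}}$-invariant eigenspaces, and writing $\langle\ ,\ \rangle = \sum_j \lambda_j\, Q|_{\fr{g}_{\lambda_j}}$ (with the cross terms vanishing by $Q$-orthogonality), the skew-symmetry of each $\op{ad}_X$ with respect to $\langle\ ,\ \rangle$ reduces to its skew-symmetry with respect to each $Q|_{\fr{g}_{\lambda_j}}$, which holds because $\op{ad}_X$ is $Q$-skew and preserves $\fr{g}_{\lambda_j}$.

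There is essentially no serious obstacle here; the only point requiring a little care is that the passage from the group-level condition to the Lie-algebra-level condition is genuinely an equivalence only when $K$ is connected, but for the \emph{stated} implication (from $G\times K$-invariance of $g$ to the relations (iv)--(vi)) connectedness is not needed, since differentiation along one-parameter subgroups inside $\fr{k}$ suffices. I would therefore phrase the lemma's proof so that it uses only the $\op{Ad}$-invariance of $Q$ and the differentiated form of $\op{Ad}_K$-invariance of $\langle\ ,\ \rangle$, exactly mirroring the proof of Lemma \ref{eigenspaceinvariant}, and remark in passing that when $K$ is connected one may freely pass back and forth between (i)--(iii) and (iv)--(vi).
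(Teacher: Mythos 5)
Your proposal is correct and is exactly the standard argument the paper has in mind: the paper states this lemma without proof as "the Lie-algebraic version" of Lemma \ref{eigenspaceinvariant}, attributing it to Equation \eqref{metend} and "simple arguments from linear algebra," which is precisely the differentiation-plus-$Q$-nondegeneracy cycle (iv) $\Rightarrow$ (v) $\Rightarrow$ (vi) $\Rightarrow$ (iv) you give. Your closing remark about connectedness also matches the paper's own remark that (i)--(vi) are all equivalent only when $K$ is connected.
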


We remark that all statements (i) - (vi) of the above lemmas are equivalent if $K$ is connected.

\subsection{The general form of a $G\times K$-invariant metric on $G$}  We recall that for a (group or algebra) representation $\chi:K\rightarrow \op{End}(V)$, two $\chi$-submodules $W_1$ and $W_2$ of $V$ are called \emph{$\chi$-equivalent} if there exists a $\chi$-equivariant isomorphism $\phi:W_1\rightarrow W_2$.  When it becomes clear from the context, we will use the terminology \emph{equivalent} instead of \emph{$\chi$-equivalent}.

The general form of a metric endomorphism $\Lambda\in \op{End}(\fr{g})$, corresponding to a $G\times K$-invariant metric on the Lie group $G$, depends on the representation $\left.\op{Ad}\right|_K:K\rightarrow \op{Aut}(\fr{g})$ defined by  $\left.\op{Ad}\right|_K(k)X:=\op{Ad}_kX$ for $k\in K$ and $X\in \fr{g}$.  Since $K$ is compact, the Lie algebra $\fr{g}$ admits a $Q$-orthogonal decomposition into irreducible $\op{Ad}_K$-submodules. The pairwise equivalent submodules comprise the \emph{isotypic components} of $\left.\op{Ad}\right|_K$. More specifically, a subspace $V_i$ of $\fr{g}$ is called an isotypic component of $\left.\op{Ad}\right|_K$ if the following two conditions hold:\\

 \noindent (i) $V_i=\fr{g}_1\oplus \cdots \oplus \fr{g}_l$ where $\fr{g}_j$, $j=1,\dots ,l$, are pairwise equivalent, irreducible $\op{Ad}_K$-submodules.\\
\noindent  (ii) If $\fr{n}\subseteq \fr{g}$ is a submodule of $\left.\op{Ad}\right|_K$ that is equivalent to $\fr{g}_j$, $j=1,\dots,l$, then $\fr{n}\subseteq V_i$.\\
 
\noindent   The Lie algebra $\fr{g}$ admits a unique (canonical) $Q$-orthogonal decomposition

\begin{equation}\label{isotyp}\fr{g}=V_1\oplus \cdots \oplus V_N,\end{equation}

\noindent called the \emph{isotypic decomposition of $\left.\op{Ad}\right|_K$}, where $V_1,\dots, V_N$ are the isotypic components of $\left.\op{Ad}\right|_K$ (more generally, the isotypic decomposition exists and is canonical for any completely reducible representation, see e.g. \cite{Tel}, \cite{Sep}).  If $K$ is connected, the isotypic decomposition of $\left.\op{Ad}\right|_K$ coincides with the isotypic decomposition of the representation $\left.\op{ad}\right|_{\fr{k}}:\fr{k}\rightarrow \op{End}(\fr{g})$, defined by $\left.\op{ad}\right|_{\fr{k}}(X)Y=[X,Y]$, $X\in \fr{k}$ , $Y\in \fr{g}$. From Schur's lemma, it follows that any endomorphism $\Lambda$ of $\fr{g}$, corresponding to a $G\times K$-invariant metric on $G$ (in fact any $\op{Ad}_K$-equivariant endomorphism of $\fr{g}$), admits the block-diagonal form 

\begin{equation}\label{blockdiaginitial}\Lambda=\begin{pmatrix} 
 \left.\Lambda\right|_{V_1} & 0 & \cdots &0\\
 0& \left.\Lambda\right|_{V_2} &\cdots &0\\
  \vdots & \cdots & \ddots &\vdots\\
  0&\cdots &\cdots &\left.\Lambda\right|_{V_N}
  \end{pmatrix},
\end{equation}
   
\noindent with respect to the isotypic decomposition \eqref{isotyp}.  In particular, $\Lambda V_i\subseteq V_i$.  Moreover, if an isotypic component $V_i$ is $\op{Ad}_K$-irreducible then $\left.\Lambda\right|_{V_i}=\lambda_i\op{Id}$.  The same conclusion is true if $K$ is connected and the isotypic component $V_i$ is $\op{ad}_{\fr{k}}$-irreducible.  More generally, in studying invariant metrics we use the following result from linear algebra.

\begin{lemma}\label{equiv}
Let $\chi:H\rightarrow \op{End}(V)$ be a completely reducible Lie group (or Lie algebra) representation and let $Q$ be a $\chi$-invariant inner product on $V$.  Assume that $V$ admits the $Q$-orthogonal decomposition $V=W_1\oplus W_2$ where $W_1$ and $W_2$ are $\chi$-submodules such that any non-zero $\chi$-submodule of $W_1$ is inequivalent to any non-zero $\chi$-submodule of $W_2$.  Then any $\chi$-equivariant endomorphism $\Lambda:V\rightarrow V$ leaves the spaces $W_1$ and $W_2$ invariant.
\end{lemma}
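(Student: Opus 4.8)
The plan is to decompose each $W_i$ into isotypic components and then apply Schur's lemma componentwise. Since $\chi$ is completely reducible, write $V = \bigoplus_{i=1}^N V_i$ for the isotypic decomposition, where each $V_i$ is a sum of pairwise equivalent irreducible submodules, and submodules lying in distinct $V_i$ are inequivalent. The key observation is that the hypothesis ``any non-zero $\chi$-submodule of $W_1$ is inequivalent to any non-zero $\chi$-submodule of $W_2$'' forces each isotypic component $V_i$ to be contained entirely in $W_1$ or entirely in $W_2$: if $V_i$ met both $W_1$ and $W_2$ nontrivially, then by the $Q$-orthogonal (hence $\chi$-invariant) projections we could find an irreducible submodule of $W_1 \cap V_i$ and an irreducible submodule of $W_2 \cap V_i$, and both would be equivalent to the common irreducible type of $V_i$, contradicting the hypothesis.

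First I would make this reduction precise: using that $W_1, W_2$ are $\chi$-submodules and $Q$ is $\chi$-invariant, the orthogonal projections $\pi_1, \pi_2 : V \to V$ onto $W_1, W_2$ are $\chi$-equivariant; restricting to any $V_i$ and using complete reducibility of $V_i \cap W_1$ and $V_i \cap W_2$ (which are themselves submodules, since $V_i$, $W_1$, $W_2$ all are), I get $V_i = (V_i \cap W_1) \oplus (V_i \cap W_2)$ as $\chi$-modules. Both summands are sums of irreducibles of the single type appearing in $V_i$, so by the inequivalence hypothesis at most one of them is non-zero; hence $V_i \subseteq W_1$ or $V_i \subseteq W_2$. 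Thus, after relabeling, $W_1 = \bigoplus_{i \in A} V_i$ and $W_2 = \bigoplus_{i \in B} V_i$ for a partition $A \sqcup B = \{1,\dots,N\}$.

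Next I would invoke the standard consequence of Schur's lemma already used in the paper around Equation \eqref{blockdiaginitial}: any $\chi$-equivariant endomorphism $\Lambda : V \to V$ satisfies $\Lambda V_i \subseteq V_i$ for every isotypic component $V_i$ (an equivariant map sends a submodule of type $\tau$ into the span of submodules of type $\tau$, which is exactly $V_i$). Since $W_1$ and $W_2$ are unions of such $V_i$, it follows immediately that $\Lambda W_1 = \bigoplus_{i \in A} \Lambda V_i \subseteq \bigoplus_{i \in A} V_i = W_1$, and likewise $\Lambda W_2 \subseteq W_2$, which is the desired conclusion.

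The only delicate point — and where I would be most careful — is the claim that an isotypic component cannot straddle both $W_1$ and $W_2$; this is exactly the step that uses the inequivalence hypothesis together with the fact that intersections of submodules with the $Q$-orthogonal summands $W_1, W_2$ are again submodules and again completely reducible. Everything else (equivariance of orthogonal projections onto invariant subspaces, $\Lambda V_i \subseteq V_i$) is routine and already implicitly available in the preceding discussion, so there is no substantial obstacle; the lemma is essentially a bookkeeping consequence of the isotypic decomposition.
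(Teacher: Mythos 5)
Your proposal is correct and follows essentially the same route as the paper: reduce to the isotypic decomposition, use the inequivalence hypothesis to show each isotypic component lies entirely in $W_1$ or in $W_2$, and conclude via $\Lambda V_i\subseteq V_i$. The only difference is cosmetic — you justify the "no straddling" step with equivariant orthogonal projections, while the paper appeals to the canonicity of the isotypic decomposition applied to a refinement of $W_1\oplus W_2$ into irreducibles.
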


\subsection{Invariant metrics on homogeneous spaces} Let $G$ be a compact Lie group and let $Q$ be an $\op{Ad}$-invariant inner product on its Lie algebra $\fr{g}$. Let $K$ be a closed subgroup of $G$ with Lie algebra $\fr{k}$ and consider the homogeneous space $G/K$. We consider the $Q$-orthogonal decomposition 

\begin{equation}\label{reddec1}\fr{g}=\fr{k}\oplus \fr{m}.\end{equation}

\noindent Then $\op{Ad}_{K}\fr{m}\subseteq \fr{m}$, and the space $\fr{m}$ is naturally identified with the tangent space $T_o(G/K)$ of $G/K$ at the origin $o:=eK$. A decomposition \eqref{reddec1} such that $\op{Ad}_K\fr{m}\subseteq \fr{m}$ is called a \emph{reductive decomposition} of $G/K$ with respect to the product $Q$ (in the general case, $Q$ is not required to be $\op{Ad}$-invariant).  A Riemannian metric $g$ on $G/K$ is called \emph{$G$-invariant} if it is invariant by the action of $G$ on $G/K$.  Equivalently, for any $x\in G$, the map $\tau_x:G/K\rightarrow G/K$ with $yK\mapsto (xy)K$ is an isometry of $(G/K,g)$.  The space $G/K$ equipped with a $G$-invariant Riemannian metric $g$ is called a \emph{Riemannian homogeneous space} $(G/K,g)$. The $G$-invariant Riemannian metrics $g$ on $G/K$ are in bijection with $\op{Ad}_K$-invariant inner products $\langle \ ,\ \rangle$ on $\fr{m}$, which in turn are in bijection with \emph{metric endomorphisms} $\Lambda=\Lambda_Q\in \op{End}(\fr{m})$ satisfying

\begin{equation}\label{metend1}\langle X,Y\rangle = Q(\Lambda X,Y), \ \ X,Y\in \fr{m}.\end{equation}

 The general form of the metric endomorphism $\Lambda$ depends on the isotypic decomposition $\fr{m}=\bigoplus_{i=1}^NV_i$ of the \emph{isotropy representation} $\left.\op{Ad}\right|_{K}:K\rightarrow \op{Gl}(\fr{m})$, given by $\left.\op{Ad}\right|_{K}(k)X=\op{Ad}_kX$, $k\in K$, $X\in \fr{m}$.  In particular, the $\op{Ad}_K$-equivariant endomorphism $\Lambda$ admits a similar form to \eqref{blockdiaginitial} with respect to the isotypic decomposition.      
 
 The following trivial type of metrics is of particular interest. 

\begin{definition}\label{defnorm}
Let $G/K$ be a homogeneous space.  A $G$-invariant metric $g$ on $G/K$ is called $G$-normal if there exists an $\op{Ad}$-invariant inner product $Q$ on $\fr{g}$ such that the corresponding metric endomorphism $\Lambda_Q$ of $g$ satisfies $\Lambda_Q=\lambda\op{Id}$, $\lambda>0$.  If $Q$ is the negative of the Killing form of $\fr{g}$ (up to scalar multiplication), the metric $g$ is called standard. 
\end{definition}

\section{Geodesic orbit manifolds}\label{sec2}

\subsection{Geodesic orbit manifolds and naturally reductive manifolds}\label{conef}

\begin{definition}\label{defg.o.}A geodesic orbit manifold (or g.o. manifold) is a Riemannian manifold $(M,g)$ such that any geodesic is the integral curve of a complete Killing vector field. Equivalently, there exists a Lie group $G$ of isometries of $(M,g)$ such that any geodesic $\gamma$ on $M$ has the form $\gamma(t)=\exp(tX)\cdot p$, $t\in \mathbb R$, where $\exp$ is the Lie exponential map on $G$, $p=\gamma(0)\in M$ and $\cdot$ denotes the (isometric) action of $G$ on $M$.   
\end{definition}

From the above definition, it follows that any connected g.o manifold $(M,g)$ is complete and the Lie group $G$ acts transitively on $M$.  Therefore, $M$ is a homogeneous space $G/K$, where $K$ is the isotropy subgroup of a point $p\in M$. At the same time, the metric $g$ is a $G$-invariant metric on $M=G/K$.  The corresponding Riemannian homogeneous space $(G/K,g)$ is called a \emph{geodesic orbit space} (or \emph{g.o. space}) and the metric $g$ is called a \emph{geodesic orbit metric} (or \emph{g.o. metric}).  To emphasize the Lie group $G$ where necessary, we will call $(M,g)$ a \emph{$G$-g.o. manifold} and the metric $g$ a \emph{$G$-g.o. metric}. 

 \begin{remark}\label{iinvariant}Definition \ref{defg.o.} implies that the property to be a $G$-g.o. manifold is local in the following sense: If $G$ and $G^{\prime}$ are connected Lie groups acting transitively and isometrically on $(M,g)$, and such that the Lie algebras of $G$ and $G^{\prime}$ coincide, then $(M,g)$ is a $G$-g.o. manifold if and only if it is a $G^{\prime}$-g.o. manifold. \end{remark}

 From Definition \ref{defg.o.}, we immediately obtain the following.   

\begin{prop}\label{g.o.1prop}Let $(M,g)$ be a homogeneous Riemannian manifold and write $M$ as the homogeneous space $M=I/K$, where $I$ is the isometry group of $(M,g)$ and $K$ is the isotropy subgroup of the action of $I$ on $M$. Then $(M,g)$ is a g.o. manifold if and only if the Riemannian space $(I/K,g)$ is a g.o. space.  Equivalently, $(M,g)$ is a g.o. manifold if and only if the Riemannian metric $g$ is an $I$-g.o. metric.\end{prop}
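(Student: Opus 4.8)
The plan is to observe that Proposition \ref{g.o.1prop} is essentially a tautology once Definition \ref{defg.o.} and Remark \ref{iinvariant} are in place, so the proof is a matter of carefully unwinding the definitions in both directions.

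First I would recall the setup: $(M,g)$ is homogeneous Riemannian, $I=\op{Isom}(M,g)$ acts transitively, and fixing a base point $p\in M$ with isotropy $H=I_p$ gives the identification $M\cong I/H$ as a $G$-space with $G=I$. Under this identification the metric $g$ becomes an $I$-invariant metric on $I/H$, so $(I/H,g)$ is a Riemannian homogeneous space in the sense of Section \ref{sec1}. This is just the standard description of a homogeneous Riemannian manifold and needs no argument.

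Next, for the forward direction: suppose $(M,g)$ is a g.o. manifold. By Definition \ref{defg.o.} there exists \emph{some} Lie group $G'$ of isometries of $(M,g)$ such that every geodesic has the form $\gamma(t)=\exp(tX)\cdot\gamma(0)$ with $\exp$ the exponential of $G'$. Since $G'$ is a group of isometries, $G'\subseteq I$; because $G'$ already acts transitively (every point lies on a geodesic through any other, or more simply the orbit condition forces transitivity on each connected component and one reduces to the identity component), and because $I$ is a Lie group containing $G'$ as a subgroup with a possibly larger Lie algebra, the geodesic-orbit property is inherited by $I$: the integral curve of a Killing field of $G'$ is still the integral curve of a Killing field of $I$, since the Killing fields of $G'$ form a subspace of those of $I$. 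Hence every geodesic is the integral curve of a complete Killing field coming from $I$, i.e. $(I/H,g)$ is a g.o. space and $g$ is an $I$-g.o. metric. (Here I would cite Remark \ref{iinvariant} to handle the passage between $G'$ and the identity component of $I$, which have the relevant Lie algebras related by inclusion; strictly the cleanest statement is that enlarging the transitive isometry group preserves the g.o. property, which is immediate from the definition.) Conversely, if $(I/H,g)$ is a g.o. space then by definition every geodesic is an orbit of a one-parameter subgroup of $I$ acting by isometries, so $(M,g)$ is a g.o. manifold by Definition \ref{defg.o.} with $G=I$. The final sentence of the proposition is just a restatement, since ``$(I/H,g)$ is a g.o. space'' and ``$g$ is an $I$-g.o. metric'' are synonymous by the terminology fixed right after Definition \ref{defg.o.}.

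The only genuine point requiring care — the ``main obstacle,'' such as it is — is the forward implication, where one must argue that the g.o. property with respect to an a priori unspecified (possibly small) transitive isometry group $G'$ upgrades to the g.o. property with respect to the full isometry group $I$. I expect to handle this by noting that a geodesic which is the orbit $t\mapsto\exp_{G'}(tX)\cdot p$ of a one-parameter subgroup of $G'\le I$ is equally well the orbit of the corresponding one-parameter subgroup of $I$, and that completeness of the associated Killing field is automatic since $(M,g)$, being homogeneous, is complete. No nontrivial structure theory is needed; everything follows directly from Definition \ref{defg.o.}, Remark \ref{iinvariant}, and the conventions established in this section.
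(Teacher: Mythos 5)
Your proof is correct and matches the paper's treatment: the paper states this proposition as an immediate consequence of Definition \ref{defg.o.}, and your argument simply makes explicit the one nontrivial point (that a geodesic which is an orbit of a one-parameter subgroup of some isometry group $G'$ is a fortiori an orbit of a one-parameter subgroup of the full isometry group $I$, since the associated Killing field is complete and hence lies in the Lie algebra of $I$). No gap; the converse direction and the final restatement are, as you say, tautological.
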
 

If $M$ is connected, the identity component $I^0$ of $I$ also acts transitively on $M$ and we may replace $I$ with $I^0$ in the above proposition. 

\begin{prop}\label{stranger} A connected homogeneous Riemannian manifold $(M,g)$ is a g.o. manifold if and only if the metric $g$ is an $I^0$-g.o. metric, where $I^0$ is the identity component of the isometry group of $(M,g)$.\end{prop}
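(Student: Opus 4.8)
The statement to prove is Proposition \ref{stranger}: a connected homogeneous Riemannian manifold $(M,g)$ is a g.o. manifold if and only if $g$ is an $I_0$-g.o. metric, where $I_0$ is the connected component of the isometry group. The plan is to deduce this from Proposition \ref{g.o.1prop}, which gives the analogous statement for the full isometry group $I$, by replacing $I$ with $I_0$. First I would observe that one implication is essentially immediate: if $g$ is an $I_0$-g.o. metric, then every geodesic has the form $\gamma(t)=\exp(tX)\cdot p$ for some $X$ in the Lie algebra of $I_0$, and since $\exp(tX)$ is a one-parameter subgroup of isometries, $\gamma$ is the integral curve of the complete Killing vector field generated by $X$; hence $(M,g)$ is a g.o. manifold directly from Definition \ref{defg.o.}. (Alternatively: an $I_0$-g.o. metric is an $I$-g.o. metric a fortiori, since $I_0\subseteq I$, and then Proposition \ref{g.o.1prop} applies.)

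For the converse, suppose $(M,g)$ is a g.o. manifold. By Proposition \ref{g.o.1prop} the metric $g$ is an $I$-g.o. metric, i.e. writing $M=I/H$ with $H$ the (full) isotropy subgroup at a point $p$, the Riemannian homogeneous space $(I/H,g)$ is a g.o. space: every geodesic through $p$ has the form $\gamma(t)=\exp(tX)\cdot p$ for some $X\in\fr{i}:=\op{Lie}(I)$. The key point is that $\fr{i}=\op{Lie}(I_0)$, since $I_0$ is the connected component of $I$, so the Lie exponential map $\exp$ of $I$ and that of $I_0$ agree and produce the same one-parameter subgroups. Therefore the very same curve $\gamma(t)=\exp(tX)\cdot p$ exhibits each geodesic as a one-parameter orbit under $I_0$; by the characterization in Definition \ref{defg.o.} (applied with the group $I_0$), $g$ is an $I_0$-g.o. metric. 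This is also exactly the content of Remark \ref{iinvariant} applied to the pair $I,I_0$, which have coinciding Lie algebras and both act transitively on the connected manifold $M$: the property of being a $G$-g.o. manifold depends only on the Lie algebra of $G$, so it transfers between $I$ and $I_0$.

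The only thing that needs a word of justification — and the place where I expect the mild subtlety to sit — is that $I_0$ indeed acts transitively on $M$, which is needed so that $M$ can be written as $I_0/H_0$ and the notion ``$I_0$-g.o. metric'' is even defined; this is noted in the sentence preceding the proposition and follows because $M$ is connected (so each $I$-orbit, being open by homogeneity and closed, is all of $M$, hence the identity component already acts transitively). Granting that, no genuine obstacle remains: the proof is a two-line reduction to Proposition \ref{g.o.1prop} together with the locality principle of Remark \ref{iinvariant}, the essential input being the elementary fact $\op{Lie}(I)=\op{Lie}(I_0)$.
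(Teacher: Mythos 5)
Your proof is correct and takes essentially the same route as the paper, which treats the proposition as an immediate consequence of Proposition \ref{g.o.1prop} once one notes that $I_0$ acts transitively on the connected manifold $M$ and that $\op{Lie}(I)=\op{Lie}(I_0)$, so the locality principle of Remark \ref{iinvariant} lets one replace $I$ by $I_0$. Your identification of the transitivity of $I_0$ as the one point needing justification matches exactly the sentence the paper places before the proposition.
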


The well-known \emph{geodesic lemma} of Kowalski and Vanhecke in \cite{KoVa} asserts that a Riemannian space $(G/K,g)$, with a reductive decomposition $\fr{g}=\fr{k}\oplus \fr{m}$, is a g.o. space if and only for any $X\in \fr{m}$ there exists a vector $W=W(X)\in \fr{k}$ such that 

\begin{equation}\label{geodesiclemma}\langle [W+X,Y]_{\fr{m}},X\rangle=0 \ \ \makebox{for all} \ \ Y\in \fr{m},\end{equation}

\noindent where $\langle \ ,\ \rangle$ is the inner product on $\fr{m}$ corresponding to the metric $g$ and $[W+X,Y]_{\fr{m}}$ denotes the orthogonal projection of $[W+X,Y]$ on $\fr{m}$. \\

The prime examples of g.o. manifolds are the naturally reductive manifolds, defined as follows.

\begin{definition}\label{defnat}
A Riemannian homogeneous space $(G/K,g)$ is called naturally reductive if there exists a reductive decomposition $\fr{g}=\fr{k}\oplus \fr{m}$ such that the inner product $\langle \ ,\ \rangle$ on $\fr{m}$, corresponding to the metric $g$, satisfies 

\begin{equation}\label{natredcond}\langle [X,Y]_{\fr{m}},X\rangle=0 \ \ \makebox{for all} \ \ X,Y\in \fr{m},\end{equation}

\noindent where $[X,Y]_{\fr{m}}$ denotes the orthogonal projection of $[X,Y]$ on $\fr{m}$.  The corresponding metric $g$ is also called naturally reductive.

A Riemannian homogeneous manifold $(M,g)$ is called naturally reductive (or $G$-naturally reductive) if there exists a Lie group $G$ of isometries acting transitively on $M$ such that the corresponding Riemannian homogeneous space $(M=G/K,g)$ is naturally reductive.
\end{definition}  

 \begin{remark}\label{iinvariant2}Similarly with Remark \ref{iinvariant}, the property to be a $G$-naturally reductive manifold is local.  This follows from the fact that condition \eqref{natredcond} is purely of Lie-algebraic nature.  More specifically, if $G$ and $G^{\prime}$ are connected Lie groups acting transitively and isometrically on $(M,g)$, and such that the Lie algebras of $G$ and $G^{\prime}$ coincide, then $(M,g)$ is a $G$-naturally reductive manifold if and only if it is a $G^{\prime}$-naturally reductive manifold. \end{remark}

Comparing Equation \eqref{geodesiclemma} for $W=0$ with Equation \eqref{natredcond}, it becomes evident that any naturally reductive space is a g.o. space and hence any naturally reductive manifold is a g.o. manifold.  Historically, the first example of a non-naturally reductive g.o. manifold is the generalized Heisenberg group (\cite{Ka}). It is not hard to show that any normal metric on $G/K$ (Definition \ref{defnorm}) is naturally reductive and hence geodesic orbit.  In fact, a well-known result of Kostant (\cite{Kos}, \cite{DaZi}) shows that any naturally reductive metric on a (almost effective) space $G/K$ is essentially induced from a bi-invariant (possibly pseudo-Riemannian) metric of a Lie group acting transitively on $G/K$.  

 The following well-known theorem of D'Atri and Ziller characterizes the left-invariant naturally reductive metrics on compact simple Lie groups.  Firstly, let $G$ be a compact simple Lie group with Lie algebra $\fr{g}$ and let $K$ be a closed connected subgroup of $G$ with Lie algebra $\fr{k}$. Let $Q$ be the negative of the Killing form of $\fr{g}$ and decompose $\fr{k}$ into the $Q$-orthogonal sum $\fr{k}=\fr{z}(\fr{k})\oplus \fr{k}_1\oplus \cdots \oplus \fr{k}_s$, where $\fr{z}(\fr{k})$ is the center of $\fr{k}$ and $\fr{k}_j$, $j=1,\dots ,s$, are the simple ideals of $\fr{k}$.  We have the $Q$-orthogonal decomposition 
 
 \begin{equation}\label{reddecg}\fr{g}=\fr{k}\oplus \fr{m}=\fr{z}(\fr{k})\oplus \fr{k}_1\oplus \cdots \oplus \fr{k}_s\oplus \fr{m}, \end{equation}

\noindent where $\fr{m}$ coincides with the tangent space of $G/K$ at the origin.
\begin{theorem}\label{naturallyDaZi}\emph{(\cite{DaZi})} A left-invariant Riemannian metric $g$ on a compact simple Lie group $G$ is naturally reductive if and only if there exists a closed connected subgroup $K$ of $G$ such that the corresponding inner product on $\fr{g}$ has the form 

\begin{equation}\label{DaZiform}\langle \ ,\ \rangle=\left.( \ , \ )\right|_{\fr{z}(\fr{k}) \times \fr{z}(\fr{k})}+\left.\lambda_1 Q\right|_{\fr{k}_1\times \fr{k}_1}+\cdots +\left.\lambda_s Q\right|_{\fr{k}_s\times \fr{k}_s}+\left.\lambda Q\right|_{\fr{m}\times \fr{m}},\end{equation}

\noindent with respect to the decomposition \eqref{reddecg}, where $\left.( \ , \ )\right|_{\fr{z}(\fr{k}) \times \fr{z}(\fr{k})}$ denotes an arbitrary inner product on $\fr{z}(\fr{k})$ and $\lambda_1,\cdots, \lambda_s,\lambda>0$.\end{theorem}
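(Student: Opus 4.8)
The plan is to verify the two directions of the equivalence, the "if" direction being essentially trivial from the definitions and the "only if" direction requiring the structure theory developed below. For the forward direction, suppose $g$ has the form \eqref{DaZiform} with respect to the decomposition \eqref{reddecg}. I would check that the inner product $\langle\ ,\ \rangle$ on $\fr{g}$ satisfies condition \eqref{natredcond} with $\fr{m}$ as in \eqref{reddecg}: because $Q$ is $\op{ad}$-invariant and $\fr{z}(\fr{k})$, $\fr{k}_1,\dots,\fr{k}_s,\fr{m}$ are $\op{ad}_{\fr{k}}$-submodules with $[\fr{m},\fr{m}]\subseteq \fr{k}\oplus\fr{m}$, one has $[X,Y]_{\fr{m}}=\lambda^{-1}\Lambda[X,Y]_{\fr{m}}$ for $X\in\fr{m}$ up to scaling, so $\langle [X,Y]_{\fr{m}},X\rangle = \lambda Q([X,Y]_{\fr{m}},X)=\lambda Q([X,Y],X)=0$ by $\op{ad}$-invariance of $Q$ and skew-symmetry of $\op{ad}_X$. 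Hence $(G=G/K,g)$ is naturally reductive in the sense of Definition \ref{defnat}.

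For the converse, assume $(G,g)$ is a left-invariant naturally reductive metric on the compact simple Lie group $G$. By Definition \ref{defnat}, there is a Lie group $\widehat{G}$ of isometries acting transitively on $G$ and a reductive decomposition $\widehat{\fr{g}}=\fr{h}\oplus\fr{p}$ with $(G=\widehat{G}/\widehat{H},g)$ naturally reductive. By Kostant's theorem (\cite{Kos}, \cite{DaZi}), the metric $g$ is induced from a bi-invariant (possibly pseudo-Riemannian) metric $B$ on $\widehat{G}$, and one may normalize so that $\widehat{G}$ is effective; then $\widehat{G}$ is a subgroup of the isometry group, which by the Ochiai--Takahashi description is locally $G\times K$ with $K\subseteq G$ closed connected. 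So $\widehat{\fr{g}}=\fr{g}\oplus\fr{k}$ (as Lie algebras, with the second copy of $\fr{k}$ embedded diagonally), and the bi-invariant form $B$ restricts on each simple ideal as a multiple of the Killing form; since $\fr{g}$ is simple, $B|_{\fr{g}}=-aQ$ for some $a$, while on the $\fr{k}$-part it is an arbitrary negative-definite $\op{ad}_{\fr{k}}$-invariant form, which splits as an arbitrary form on $\fr{z}(\fr{k})$ plus scalar multiples of $Q$ on each simple ideal $\fr{k}_j$. Computing the induced left-invariant metric on $G$ from this data — i.e. pushing $B$ through the submersion $\widehat{G}\to\widehat{G}/\widehat{H}\cong G$ — recovers exactly the form \eqref{DaZiform}: the $\fr{m}$-component comes from the simple ideal $\fr{g}$ and is therefore a scalar multiple $\lambda Q$, while the components along $\fr{z}(\fr{k})$ and the $\fr{k}_j$ inherit the respective freedom from $B$.

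The main obstacle is the bookkeeping in the last step: one must be careful that the subgroup $K$ appearing here is the same subgroup $K$ whose existence is asserted in the theorem, that the diagonal embedding of $\fr{k}$ in $\fr{g}\oplus\fr{k}$ interacts correctly with the submersion so that the $Q$-orthogonal complement $\fr{m}$ of $\fr{k}$ in $\fr{g}$ is the image of the simple factor, and that positive-definiteness forces all the scalars $\lambda_1,\dots,\lambda_s,\lambda$ to be strictly positive. This amounts to tracking how a bi-invariant (indefinite) metric on $\widehat{G}=G\times K$ of signature controlled by the relative signs on the two factors descends to a genuine Riemannian metric on $G$ — the classical D'Atri--Ziller computation — and I would cite \cite{DaZi} for the explicit matching rather than redo it, since the present excerpt is allowed to invoke this theorem's proof as known background.
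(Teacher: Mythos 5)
The paper does not prove this theorem; it is quoted from \cite{DaZi}, so your sketch has to stand on its own. Your backward direction is a reasonable outline of the classical argument (Kostant's theorem plus the Ochiai--Takahashi description of the isometry group, then decomposing an $\op{ad}$-invariant symmetric bilinear form on $\fr{g}\oplus\fr{k}$ ideal by ideal), although deferring ``the explicit matching'' to \cite{DaZi} leaves uncarried the actual derivation of \eqref{DaZiform} --- the relation between the coefficients of the bi-invariant (generally indefinite) form on $G\times K$ and the resulting $\lambda_j$, together with the sign analysis showing the induced metric is positive definite while the $\lambda_j$ may lie on either side of $\lambda$. That computation is the substance of the backward direction, not bookkeeping.

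The forward direction, however, has a genuine gap. You verify condition \eqref{natredcond} only for $X,Y\in\fr{m}$ with respect to the decomposition $\fr{g}=\fr{k}\oplus\fr{m}$ and conclude that ``$(G=G/K,g)$'' is naturally reductive. But $G$ is not $G/K$: the manifold in the theorem is the group $G$ itself, whose tangent space at $e$ is all of $\fr{g}$, and the inner product \eqref{DaZiform} lives on all of $\fr{g}$, including the $\fr{k}$-directions where the scalars $\lambda_1,\dots,\lambda_s$ and the arbitrary product on $\fr{z}(\fr{k})$ sit. What your computation actually proves is that the metric $\lambda Q|_{\fr{m}\times\fr{m}}$ on the homogeneous space $G/K$ is naturally reductive --- a different and much weaker statement. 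To prove the theorem one must present $G$ as $(G\times K)/\Delta K$, choose a reductive complement $\fr{p}$ of the diagonal $\Delta\fr{k}$ inside $\fr{g}\oplus\fr{k}$ (the complement orthogonal with respect to the relevant bi-invariant pseudo-Riemannian form), and verify \eqref{natredcond} for all vectors of $\fr{p}$, which project onto all of $\fr{g}$ and not just onto $\fr{m}$. The nontrivial content is precisely that this works even though $\langle\ ,\ \rangle|_{\fr{k}\times\fr{k}}$ is not a multiple of $Q$; as written, your forward direction never touches the part of the metric that makes the theorem interesting.
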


We remark that all the above metrics are $G\times K$-naturally reductive.  More specifically, the metrics are $G\times K$-invariant.  In matrix form, the corresponding metric endomorphism $\Lambda=\Lambda_Q\in \op{End}(\fr{g})$ of the naturally reductive metrics \eqref{DaZiform} is 

\begin{equation*}\label{metricendomorphismnaturally}\Lambda=\begin{pmatrix} 
 \left.\Lambda\right|_{\fr{z}(\fr{k})} & 0 & \cdots &0 &0\\
 0& \left.\lambda_1\op{Id}\right|_{\fr{k}_1} &\cdots &0 &0\\
  \vdots & \cdots & \ddots &\vdots &\vdots\\
  0&\cdots &\cdots &\left.\lambda_s\op{Id}\right|_{\fr{k}_s} &0\\
  0&\cdots &\cdots &0 &\left.\lambda\op{Id}\right|_{\fr{m}}
  \end{pmatrix}.
\end{equation*}

Moreover, the bi-invariant metrics on (compact) Lie groups have the following form (see for example \cite{DaZi}).  

\begin{prop}\label{DaZiBi} A metric $g$ on a compact Lie group $K$ is bi-invariant if and only if the corresponding metric endomorphism $\Lambda\in \op{End}(\fr{k})$ has the form

\begin{equation*}\label{XCD1}\Lambda=\begin{pmatrix} 
 \left.\Lambda\right|_{\fr{z}(\fr{k})} & 0 & \cdots &0\\
 0& \left.\lambda_1\op{Id}\right|_{\fr{k}_1} &\cdots &0\\
  \vdots & \cdots & \ddots &\vdots\\
  0&\cdots &\cdots &\left.\lambda_s\op{Id}\right|_{\fr{k}_s}
  \end{pmatrix},\end{equation*}

\noindent i.e. $\fr{m}$ is trivial. 
\end{prop}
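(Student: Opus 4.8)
The plan is to reduce the statement to the well-known description of $\mathrm{Ad}$-invariant inner products on a compact Lie algebra, combined with Lemma \ref{eigenspaceinvariant1} (or its group version, Lemma \ref{eigenspaceinvariant}). First I would recall that a left-invariant metric $g$ on $K$ is bi-invariant precisely when its corresponding inner product $\langle\ ,\ \rangle$ on $\fr{k}$ is $\mathrm{Ad}_K$-invariant; this is the special case $K = G$ of the $G\times K$-invariance discussion, since bi-invariant metrics are exactly the $K\times K$-invariant ones. By Lemma \ref{eigenspaceinvariant1}, this is equivalent to the metric endomorphism $\Lambda \in \op{End}(\fr{k})$ (defined via $\langle X,Y\rangle = Q(\Lambda X, Y)$ for the negative of the Killing form $Q$ restricted to each simple factor) being $\op{ad}_{\fr{k}}$-equivariant: $\op{ad}_X \circ \Lambda = \Lambda \circ \op{ad}_X$ for all $X\in\fr{k}$.

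Next I would apply Schur's lemma together with the structure of the adjoint representation of a compact Lie algebra. Decompose $\fr{k} = \fr{z}(\fr{k}) \oplus \fr{k}_1 \oplus \cdots \oplus \fr{k}_s$ into the center and the simple ideals. The adjoint representation acts trivially on $\fr{z}(\fr{k})$ and acts on each simple ideal $\fr{k}_j$ by its own adjoint representation, which is irreducible; moreover the representations $\op{ad}|_{\fr{k}_i}$ and $\op{ad}|_{\fr{k}_j}$ on distinct simple ideals are inequivalent (they have different dimensions or, more robustly, distinct highest weights / Casimir data), and each is inequivalent to the trivial representation on $\fr{z}(\fr{k})$. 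Hence this decomposition is precisely the isotypic decomposition of $\op{ad}|_{\fr{k}}$, with the proviso that the whole center $\fr{z}(\fr{k})$ is a single isotypic component (the trivial isotype). By the block-diagonal form \eqref{blockdiaginitial} applied to $\op{ad}_{\fr{k}}$-equivariant endomorphisms, $\Lambda$ must preserve each summand, and on each irreducible simple factor $\fr{k}_j$ Schur's lemma forces $\Lambda|_{\fr{k}_j} = \lambda_j \op{Id}$ with $\lambda_j > 0$ (positivity because $\langle\ ,\ \rangle$ is an inner product and $Q$ is positive definite on $\fr{k}_j$). On the center there is no constraint, so $\Lambda|_{\fr{z}(\fr{k})}$ is an arbitrary positive-definite symmetric endomorphism. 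Conversely, any $\Lambda$ of this form is manifestly $\op{ad}_{\fr{k}}$-equivariant — it commutes with $\op{ad}_X$ on each block — hence gives a bi-invariant metric; this direction is immediate.

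The only mildly delicate point is the claim that distinct simple ideals carry inequivalent adjoint modules and that none is equivalent to the trivial one; the latter is clear since $\dim \fr{k}_j \geq 3 > 1$ and $\op{ad}|_{\fr{k}_j}$ is nontrivial, and the former follows because an $\op{ad}_{\fr{k}}$-equivariant isomorphism $\fr{k}_i \to \fr{k}_j$ would have to intertwine $[\fr{k}_i, -]$ with $[\fr{k}_j, -]$, but $\fr{k}_j$ acts as zero on $\fr{k}_i$ for $i \neq j$ while $\fr{k}_i$ acts nontrivially on itself, a contradiction. I expect this is the main (and only real) obstacle, and it is entirely standard. Alternatively, one can invoke directly the classical fact that the $\mathrm{Ad}$-invariant inner products on a compact Lie algebra are exactly those of the form (arbitrary inner product on the center) $\oplus$ (positive multiple of $-B$ on each simple ideal), which is essentially \eqref{DaZiBi} rewritten; I would cite \cite{DaZi} for this as the excerpt already does, and present the Schur-lemma argument above as the short self-contained justification.
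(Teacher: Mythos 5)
The paper gives no proof of Proposition \ref{DaZiBi}; it simply cites \cite{DaZi}, so your argument is filling a gap rather than diverging from the text. Your proof is correct and is exactly the argument one would expect from the paper's own Section 2 machinery: bi-invariance $\Leftrightarrow$ $\op{ad}_{\fr{k}}$-equivariance of $\Lambda$ (Lemma \ref{eigenspaceinvariant1}), identification of $\fr{z}(\fr{k})\oplus\fr{k}_1\oplus\cdots\oplus\fr{k}_s$ with the isotypic decomposition of $\left.\op{ad}\right|_{\fr{k}}$, and then the block form \eqref{blockdiaginitial} plus Schur on each simple ideal. Your justification that distinct simple ideals are inequivalent $\op{ad}_{\fr{k}}$-modules should rest on the annihilator argument you give at the end ($\fr{k}_i$ acts nontrivially on itself but trivially on $\fr{k}_j$), not on ``different dimensions'', since isomorphic simple ideals can occur; but you do supply the correct argument, so this is only a matter of which parenthetical to keep. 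The one substantive caveat is in the converse direction: passing from $\op{ad}_{\fr{k}}$-equivariance of $\Lambda$ back to $\op{Ad}_K$-invariance of the metric requires $K$ connected (as the paper's remark after Lemma \ref{eigenspaceinvariant1} notes), and the ``if'' direction of the proposition genuinely fails for disconnected $K$ (e.g.\ a disconnected group whose component group permutes central directions). The paper leaves this hypothesis implicit, and $K$ is connected wherever the proposition is applied, so you should simply state the connectedness assumption explicitly when you invoke the equivalence.
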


For the following results, we recall that the universal cover of a homogeneous space $G/K$ with $G$ semisimple is the homogeneous space $\widetilde{G}/\widetilde{K}$, where $\widetilde{G}$ is the universal covering group of $G$ and $\widetilde{K}$ is the identity component of $\pi^{-1}(K)$, with $\pi:\widetilde{G}\rightarrow G$ being the canonical projection (\cite{No}).  We will use the following proposition in the sequel.

\begin{prop}\label{UnivCover}\emph{( \cite{SoDed}, \cite{ArSoSt1})} If any $\widetilde{G}$-invariant g.o. metric on the universal cover $\widetilde{G}/\widetilde{K}$ of a homogeneous space $G/K$ is $\widetilde{G}$-normal (resp. standard if $G$ is simple) then any $G$-invariant g.o. metric on $G/K$ is $G$-normal (resp. standard). \end{prop}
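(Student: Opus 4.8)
The plan is to push the whole question down to the Lie-algebra level, where the space $G/K$ and its universal cover $\widetilde G/\widetilde K$ carry literally the same data. Fix an $\op{Ad}$-invariant inner product $Q$ on $\fr g$ and the associated $Q$-orthogonal reductive decomposition $\fr g=\fr k\oplus\fr m$. Since $\widetilde G$ has Lie algebra $\fr g$ and $\widetilde K$, being the identity component of $\pi^{-1}(K)$, has Lie algebra $\fr k$, this same decomposition is a reductive decomposition for $\widetilde G/\widetilde K$. A $G$-invariant metric $g$ on $G/K$ corresponds to an $\op{Ad}_K$-invariant inner product $\langle\ ,\ \rangle$ on $\fr m$; because $\widetilde K$ is connected, $\op{Ad}_K$-invariance forces $\op{ad}_{\fr k}$-invariance and hence $\op{Ad}_{\widetilde K}$-invariance, so the very same $\langle\ ,\ \rangle$ also defines a $\widetilde G$-invariant metric $\widetilde g$ on $\widetilde G/\widetilde K$. (Passing from $G/K$ to $\widetilde G/\widetilde K$ can only enlarge the class of admissible inner products, which is why the implication runs in this direction and not the reverse.)

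The crucial observation is that, by the Kowalski--Vanhecke geodesic lemma \eqref{geodesiclemma}, the g.o. property is purely Lie-algebraic: $(G/K,g)$ is a $G$-g.o. space if and only if for every $X\in\fr m$ there is $W=W(X)\in\fr k$ with $\langle[W+X,Y]_{\fr m},X\rangle=0$ for all $Y\in\fr m$, and this condition involves only $\fr g$, $\fr k$, the $Q$-orthogonal projection onto $\fr m$, and $\langle\ ,\ \rangle$. All of these data agree for $G/K$ and for $\widetilde G/\widetilde K$, so $g$ is a $G$-g.o. metric on $G/K$ if and only if $\widetilde g$ is a $\widetilde G$-g.o. metric on $\widetilde G/\widetilde K$.

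The proof then concludes quickly. Start with a $G$-invariant g.o. metric $g$ on $G/K$; the associated $\widetilde g$ is a $\widetilde G$-invariant g.o. metric on $\widetilde G/\widetilde K$, hence $\widetilde G$-normal by hypothesis, meaning that there is an $\op{Ad}$-invariant inner product $Q'$ on $\fr g$ whose metric endomorphism for $\widetilde g$ equals $\lambda\op{Id}$ on $\fr m$. But the metric endomorphism of $g$ computed with respect to the same $Q'$ is determined by the same $\langle\ ,\ \rangle$ on $\fr m$, so it is again $\lambda\op{Id}$, and $g$ is $G$-normal. When $G$ is simple, $\fr g$ is simple, $Q'$ must be a positive multiple of the negative Killing form, so ``$\widetilde G$-normal'' is ``standard'' and the identical computation shows that $g$ is standard.

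The argument contains no genuinely hard step; the one point that requires care is the possible disconnectedness of $K$ (which is exactly why the implication cannot be reversed, since a $\widetilde G$-invariant metric on $\widetilde G/\widetilde K$ need not come from an $\op{Ad}_K$-invariant inner product), together with the bookkeeping that ``g.o.'' is used throughout in the sense of geodesics generated by Killing fields of the acting group --- $G$ in the conclusion and $\widetilde G$ in the hypothesis --- which is exactly the meaning intended in Proposition \ref{UnivCover}.
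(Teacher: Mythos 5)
Your proof is correct and follows essentially the same route the paper uses for the analogous Proposition \ref{UnivCover1}: the inner product on $\fr{m}$ transfers unchanged to $\widetilde{G}/\widetilde{K}$ because $\widetilde{K}$ is connected, the g.o. property is detected by the purely Lie-algebraic geodesic lemma \eqref{geodesiclemma}, and the normality (or standardness) of the resulting metric descends since the witnessing $\op{Ad}$-invariant product $Q'$ lives on the common Lie algebra $\fr{g}$. No gaps.
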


Similarly, we have the following.

\begin{prop}\label{UnivCover1} If any $\widetilde{G}$-invariant g.o. metric on the universal cover $\widetilde{G}/\widetilde{K}$ of a homogeneous space $G/K$ is $\widetilde{G}$-naturally reductive then any $G$-invariant g.o. metric on $G/K$ is $G$-naturally reductive. \end{prop}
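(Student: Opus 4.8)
The plan is to reduce the statement for $G/K$ to the statement on its universal cover $\widetilde{G}/\widetilde{K}$, in exactly the same spirit as Proposition \ref{UnivCover}, but now tracking the naturally reductive condition \eqref{natredcond} instead of normality. First I would recall the setup: with $\pi\colon\widetilde{G}\to G$ the universal covering homomorphism, $\widetilde{K}$ the identity component of $\pi^{-1}(K)$, and $\fr{g}=\fr{k}\oplus\fr{m}$ a reductive decomposition, the differential $d\pi$ at the identity is a Lie algebra isomorphism carrying $\widetilde{\fr{k}}$ onto $\fr{k}$ and $\widetilde{\fr{m}}$ onto $\fr{m}$; I can choose $\widetilde{\fr{m}}:=(d\pi)^{-1}(\fr{m})$ so that reductive decompositions match. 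A $G$-invariant metric $g$ on $G/K$ pulls back to a $\widetilde{G}$-invariant metric $\widetilde{g}$ on $\widetilde{G}/\widetilde{K}$ with the same inner product $\langle\ ,\ \rangle$ on $\widetilde{\fr{m}}\cong\fr{m}$ under $d\pi$, and $\widetilde{g}$ is $\widetilde{G}$-g.o.\ whenever $g$ is $G$-g.o.\ (covering maps carry geodesics to geodesics and Killing fields pull back, as already used implicitly in the excerpt's discussion of universal covers).

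Next I would apply the hypothesis: since $g$ is a $G$-invariant g.o.\ metric on $G/K$, the pulled-back metric $\widetilde{g}$ is a $\widetilde{G}$-invariant g.o.\ metric on $\widetilde{G}/\widetilde{K}$, hence by assumption $\widetilde{G}$-naturally reductive. By Definition \ref{defnat} this means there is a reductive decomposition $\widetilde{\fr{g}}=\widetilde{\fr{k}}'\oplus\widetilde{\fr{m}}'$ (a priori possibly different from the chosen one, and possibly relative to a larger transitive group on $\widetilde{G}/\widetilde{K}$) such that the inner product satisfies $\langle[X,Y]_{\widetilde{\fr{m}}'},X\rangle=0$ for all $X,Y\in\widetilde{\fr{m}}'$. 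The key observation is that condition \eqref{natredcond} is purely Lie-algebraic, so I need to transport this Lie-algebra data down through $d\pi$. The cleanest route is: natural reductivity of $\widetilde{g}$ is, by Kostant's theorem (\cite{Kos}, \cite{DaZi}) cited in the excerpt, equivalent to $\widetilde{g}$ being induced from a bi-invariant (possibly pseudo-Riemannian) metric on some connected Lie group $\widetilde{L}$ acting transitively on $\widetilde{G}/\widetilde{K}$; composing the action of $\widetilde{L}$ with $\pi$ (more precisely, descending to the quotient covered by the deck-transformation group) yields a connected Lie group acting transitively on $G/K$ with a bi-invariant metric inducing $g$, so $g$ is $G$-naturally reductive. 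Alternatively, and more elementarily, one argues that Definition \ref{defnat}'s condition \eqref{natredcond} for $\widetilde{g}$ on $\widetilde{\fr g}=\widetilde{\fr k}\oplus\widetilde{\fr m}$ can be arranged on the \emph{chosen} reductive complement by an averaging/conjugation argument (as in the proof of Proposition \ref{UnivCover}), and then $d\pi$ transports it verbatim to \eqref{natredcond} for $g$ on $\fr{g}=\fr{k}\oplus\fr{m}$, since $d\pi$ is a Lie algebra isomorphism preserving brackets, the splitting, and the inner product.

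The main obstacle I anticipate is the bookkeeping around \emph{which} transitive group and reductive decomposition witnesses natural reductivity of $\widetilde{g}$: Definition \ref{defnat} allows the witnessing group $L$ to be strictly larger than $\widetilde{G}$, so I cannot simply apply $d\pi$ — there may be no corresponding subgroup of the isometry group downstairs with the same Lie algebra. The resolution is to invoke Kostant's structure theorem to reduce to bi-invariant metrics on a transitive Lie group, and then note that $\widetilde{G}/\widetilde{K}\to G/K$ is a Riemannian covering with a finite (or discrete, properly discontinuous central) deck group; bi-invariant metrics and transitive actions descend through such covers. If one prefers to avoid Kostant, the fallback is to cite that natural reductivity with respect to \emph{the full isometry group} can always be realized (replacing $L$ by $\widetilde{G}$ is legitimate precisely when $\widetilde{G}$ is already the relevant transitive group, which holds here since we only care about $\widetilde{G}$-g.o.\ metrics), so that \eqref{natredcond} holds on a $\widetilde{G}$-reductive complement and transports directly. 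I would present the Kostant-based argument as the primary one and remark on the elementary transport argument, mirroring Remark \ref{iinvariant2}'s emphasis that natural reductivity is a local, Lie-algebraic property.
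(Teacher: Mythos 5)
Your proposal is correct, and your ``alternative, more elementary'' argument at the end is essentially the paper's proof; but the route you present as primary (Kostant's structure theorem plus descent of bi-invariant metrics through the deck group) is a genuinely different and heavier path than the one the paper takes. The paper's proof exploits the fact that $G/K$ and $\widetilde{G}/\widetilde{K}$ share the \emph{same} Lie algebra pair $(\fr{g},\fr{k})$ and the same tangent space $\fr{m}$: a $G$-invariant metric corresponds to an $\op{Ad}_K$-invariant inner product on $\fr{m}$, which is automatically $\op{Ad}_{\widetilde{K}}$-invariant because $\widetilde{K}$ is connected; the g.o.\ property transfers upward because the geodesic lemma \eqref{geodesiclemma} is purely Lie-algebraic (you instead argue via lifting Killing fields and geodesics, which also works); and then the hypothesis gives condition \eqref{natredcond} for some reductive complement in $\widetilde{\fr{g}}=\fr{g}$, which is verbatim the natural reductivity condition for $(G/K,g)$. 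The obstacle you spend most of your effort on --- that the witnessing transitive group for natural reductivity of $\widetilde{g}$ might be strictly larger than $\widetilde{G}$ --- does not actually arise: the hypothesis is that the metric is \emph{$\widetilde{G}$-naturally reductive}, which in the paper's terminology (Definition \ref{defnat} and Remark \ref{iinvariant2}) pins the witnessing group down to $\widetilde{G}$ itself, so condition \eqref{natredcond} already lives on a decomposition $\fr{g}=\fr{k}\oplus\fr{m}'$ of the common Lie algebra and transports down with no further work. Your Kostant-based descent is not wrong (the $\widetilde{G}$-action does descend through the deck group $\pi^{-1}(K)/\widetilde{K}$), but it imports machinery the statement does not need; the direct Lie-algebraic transport is both shorter and avoids the bookkeeping about which group acts transitively downstairs.
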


\begin{proof}Let $\fr{g}$ be the Lie algebra of the groups $G$, $\widetilde{G}$ and let $\fr{k}$ be the Lie algebra of the groups $K$, $\widetilde{K}$.  For any reductive decomposition $\fr{g}=\fr{k}\oplus \fr{m}$, the subspace $\fr{m}$ coincides with both tangent spaces $T_o(G/K)$ and $T_o(\widetilde{G}/\widetilde{K})$. Let $g$ be a $G$-invariant g.o. metric on $G/K$. Then $g$ corresponds to an $\op{Ad}_K$-invariant inner product $\langle \ ,\ \rangle$ on $\fr{m}$, which is also $\op{Ad}_{\widetilde{K}}$-invariant given that $\widetilde{K}$ is connected.  Therefore, the product $\langle \ ,\ \rangle$, and hence the metric $g$, defines a $\widetilde{G}$-invariant metric on $\widetilde{G}/\widetilde{K}$. Since $g$ is a g.o. metric on $G/K$, the corresponding inner product $\langle \ ,\ \rangle$ satisfies Equation \eqref{geodesiclemma}, and thus the same equation implies that $g$ is also a g.o. metric on $\widetilde{G}/\widetilde{K}$. By the assumption of the proposition, $g$ is a $\widetilde{G}$-naturally reductive metric on $\widetilde{G}/\widetilde{K}$.  Therefore, the product $\langle \ ,\ \rangle$ satisfies Equation \eqref{natredcond} for some choice of the orthogonal complement $\fr{m}$.  By the same equation, it follows that the metric $g$ is $G$-naturally reductive on $G/K$.\end{proof}

\subsection{Riemannian homogeneous spaces and Lie groups as geodesic orbit manifolds}

Let $G/K$ be a homogeneous space with $G$ compact and consider the reductive decomposition $\fr{g}=\fr{k}\oplus \fr{m}$ with respect to a fixed $\op{Ad}$-invariant inner product $Q$ on $\fr{g}$.  The following is a sufficient and necessary algebraic condition for a metric endomorphism $\Lambda=\Lambda_Q$, given by Equation \eqref{metend1}, to define a g.o. metric on $G/K$.

\begin{prop}\emph{(\cite{AlAr}, \cite{SoTr})}\label{GOCond} A $G$-invariant metric on $G/K$, with corresponding metric endomorphism $\Lambda\in \op{End}(\fr{m})$, is $G$-geodesic orbit if and only if for any vector $X\in \fr{m}$ there exists a vector $W=W(X)\in \fr{k}$ such that  

\begin{equation*}\label{cor}[W+X,\Lambda X]=0.\end{equation*}
\end{prop}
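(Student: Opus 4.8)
The plan is to obtain Proposition \ref{GOCond} directly from the Kowalski--Vanhecke geodesic lemma \eqref{geodesiclemma} by translating the projection-and-inner-product condition there into a single bracket identity via the metric endomorphism $\Lambda$ of \eqref{metend1}. I will use three elementary facts about the reductive decomposition $\fr g=\fr k\oplus\fr m$ of \eqref{reddec1}: (a) $\Lambda$ maps $\fr m$ into $\fr m$ and is $Q$-self-adjoint, since $\langle X,Y\rangle=Q(\Lambda X,Y)$ is symmetric; (b) $\fr k$ and $\fr m$ are $Q$-orthogonal and $[\fr k,\fr m]\subseteq\fr m$; (c) for $Z\in\fr k$ the map $\op{ad}_Z$ is skew-symmetric with respect to $\langle\ ,\ \rangle$.

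First I would fix $X\in\fr m$ and $W\in\fr k$ and rewrite, for arbitrary $Y\in\fr m$, the left-hand side of \eqref{geodesiclemma}. Writing $[W+X,Y]=[W+X,Y]_{\fr m}+[W+X,Y]_{\fr k}$ and using (a) and (b),
\[
\langle [W+X,Y]_{\fr m}, X\rangle = Q\big(\Lambda[W+X,Y]_{\fr m}, X\big)= Q\big([W+X,Y]_{\fr m}, \Lambda X\big)= Q\big([W+X,Y], \Lambda X\big),
\]
the last equality because $[W+X,Y]_{\fr k}\perp_Q \Lambda X\in\fr m$. Then $\op{Ad}$-invariance of $Q$ gives $Q([W+X,Y], \Lambda X)=-Q(Y, [W+X,\Lambda X])$. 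Hence condition \eqref{geodesiclemma} for the pair $(X,W)$ is equivalent to $Q(Y, [W+X,\Lambda X])=0$ for all $Y\in\fr m$, i.e. to $[W+X,\Lambda X]\perp_Q\fr m$.

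The remaining point, which is the only place where one must be slightly careful, is to show that $[W+X,\Lambda X]$ automatically lies in $\fr m$, so that being $Q$-orthogonal to $\fr m$ forces it to vanish. By (b) we have $[W,\Lambda X]\in\fr m$, so it suffices to check $[X,\Lambda X]\in\fr m$, i.e. $Q([X,\Lambda X],Z)=0$ for every $Z\in\fr k$. Using $\op{Ad}$-invariance of $Q$, $[\fr k,\fr m]\subseteq\fr m$, and $\langle A,B\rangle=Q(\Lambda A,B)$, I compute $Q([X,\Lambda X],Z)=-Q(\Lambda X,[X,Z])=Q(\Lambda X,[Z,X])=\langle X,[Z,X]\rangle$, and this vanishes by (c) (skew-symmetry of $\op{ad}_Z$ together with symmetry of $\langle\ ,\ \rangle$). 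Combining everything: $(G/K,g)$ is $G$-geodesic orbit $\iff$ for every $X\in\fr m$ there is $W\in\fr k$ with $[W+X,\Lambda X]\perp_Q\fr m$ $\iff$ for every $X\in\fr m$ there is $W\in\fr k$ with $[W+X,\Lambda X]=0$, the last step because $[W+X,\Lambda X]\in\fr m$ and $Q$ is positive definite on $\fr m$. This is the asserted criterion, and since the whole argument is a chain of equivalences both implications are covered. The main obstacle is precisely the verification in the third step that the $\fr k$-component of $[W+X,\Lambda X]$ is zero; everything else is routine bookkeeping with the invariance of $Q$ and the self-adjointness of $\Lambda$.
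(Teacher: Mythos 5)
Your argument is correct: the paper itself states Proposition \ref{GOCond} without proof (citing \cite{AlAr} and \cite{SoTr}), and your derivation from the Kowalski--Vanhecke lemma \eqref{geodesiclemma} — converting the condition to $[W+X,\Lambda X]\perp_Q\fr{m}$ via self-adjointness of $\Lambda$ and $\op{ad}$-invariance of $Q$, and then checking that $[W+X,\Lambda X]$ lies in $\fr{m}$ using the $\op{ad}_{\fr{k}}$-skew-symmetry of the invariant product — is exactly the standard route taken in those references. All steps check out, including the only delicate one (vanishing of the $\fr{k}$-component), so nothing needs to be added.
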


The following lemma, which we call the \emph{normalizer lemma}, is fundamental for the study of g.o. spaces.

\begin{lemma}\label{NormalizerLemma}\emph{(\cite{NikAn})} 
Let $(G/K,g)$ be a geodesic orbit Riemannian space.  Then the inner product $\langle \ ,\ \rangle$ generating the metric $g$ is not only $\op{Ad}_K$-invariant but also $\op{Ad}_{N^0(K)}$-invariant, where $N^0(K)$ is the identity component of the normalizer of $K$ in $G$.\end{lemma} 

An important consequence of the above lemma is the following diagonalization result (see for example \cite{SoDed}).

\begin{corol}\label{nikoniko}Let $(G/K,\Lambda)$ be a g.o. space with $G$ compact and $K$ connected.  Consider the $Q$-orthogonal decomposition 

\begin{equation*}\fr{m}=\fr{c}_{\fr{m}}(\fr{k})\oplus \fr{p},\end{equation*}

\noindent of the tangent space $\fr{m}$ of $G/K$ at the origin, where $\fr{c}_{\fr{m}}(\fr{k})$ is the Lie algebra of $N^0(K)/K$ and $\fr{p}$ is the tangent space of $G/N^0(K)$ at the origin.  Then the metric $\Lambda$ admits the block-diagonal form

\begin{equation*}\Lambda=\begin{pmatrix} \left.\Lambda\right|_{\fr{c}_{\fr{m}}(\fr{k})} & 0\\
0 &  \left.\Lambda\right|_{\fr{p}}  \end{pmatrix},\end{equation*}

\noindent where $\left.\Lambda\right|_{\fr{c}_{\fr{m}}(\fr{k})}$ defines a bi-invariant metric on the group $N^0(K)/K$ and $\left.\Lambda\right|_{\fr{p}}$ defines a $G$-g.o. metric on the homogeneous space $G/N^0(K)$.\end{corol}

Let $K$ be a closed subgroup of $G$ with Lie algebra $\fr{k}$.  For the rest of this section we restrict our attention to $G\times K$-invariant metrics on $G$. 

\begin{prop}\label{GoConditionBasic} A $G\times K$-invariant metric $g$ on $G$, with corresponding metric endomorphism $\Lambda\in \op{End}(\fr{g})$, is $G\times K$-geodesic orbit if and only if for any vector $X\in \fr{g}$ there exists a vector $W=W(X)\in \fr{k}$ such that 

\begin{equation*}\label{mpeee}[W+X,\Lambda X]=0.\end{equation*}\end{prop}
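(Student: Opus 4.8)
The plan is to deduce Proposition \ref{GoConditionBasic} from the geodesic lemma \eqref{geodesiclemma} (equivalently, from Proposition \ref{GOCond}) by viewing $(G,g)$ as the Riemannian homogeneous space $\big((G\times K)/\Delta K,\,g\big)$, where $\Delta K=\{(k,k):k\in K\}$ is the isotropy subgroup at $e$ of the transitive isometric action $(x,y)\cdot z=xzy^{-1}$ of $G\times K$ on $G$. Write $\widehat{\fr g}=\fr g\oplus\fr k$ for the Lie algebra of $G\times K$ (a direct product, so its bracket is taken componentwise) and $\Delta\fr k=\{(X,X):X\in\fr k\}$ for that of $\Delta K$. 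The one point that needs care is the choice of reductive complement: I would take $\widehat{\fr m}:=\fr g\oplus\{0\}$, which is visibly $\op{Ad}_{\Delta K}$-invariant and complementary to $\Delta\fr k$, so that $\widehat{\fr g}=\Delta\fr k\oplus\widehat{\fr m}$ is a reductive decomposition. (If one insists on an orthogonal complement, $\widehat{\fr m}$ is orthogonal to $\Delta\fr k$ for the $\op{Ad}_{\Delta K}$-invariant inner product $\widehat Q\big((\zeta_1,\eta_1),(\zeta_2,\eta_2)\big)=Q(\zeta_1-\eta_1,\zeta_2-\eta_2)+Q(\eta_1,\eta_2)$.) Under the canonical identification of $\widehat{\fr m}$ with $T_eG=\fr g$, which sends $(\zeta,0)$ to the velocity at $e$ of $t\mapsto\exp(t\zeta)$, namely to $\zeta$, the inner product induced by $g$ on $\widehat{\fr m}$ is exactly $\langle\ ,\ \rangle$, and the corresponding metric endomorphism is the given $\Lambda\in\op{End}(\fr g)$.

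With this in place I would apply the geodesic lemma \eqref{geodesiclemma} to $\widehat{\fr g}=\Delta\fr k\oplus\widehat{\fr m}$: $g$ is $G\times K$-geodesic orbit iff for every $\widehat X=(X,0)\in\widehat{\fr m}$ (i.e.\ every $X\in\fr g$) there is $\widehat W=(W,W)\in\Delta\fr k$ (i.e.\ $W\in\fr k$) with $\langle[\widehat W+\widehat X,\widehat Y]_{\widehat{\fr m}},\widehat X\rangle=0$ for all $\widehat Y=(Y,0)\in\widehat{\fr m}$. Here the key simplification is that $[\widehat W+\widehat X,\widehat Y]=\big[(W+X,\,W),(Y,0)\big]=\big([W+X,Y],\,0\big)$ already lies in $\widehat{\fr m}=\fr g\oplus\{0\}$, so the projection onto $\widehat{\fr m}$ is trivial and the condition reads $\langle[W+X,Y],X\rangle=0$ for all $Y\in\fr g$. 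I would finish by rewriting this in the stated form via $\langle A,B\rangle=Q(\Lambda A,B)$, using that $\Lambda$ is $Q$-self-adjoint and that $Q$ is $\op{Ad}$-invariant (so each $\op{ad}_V$, $V\in\fr g$, is $Q$-skew): $\langle[W+X,Y],X\rangle=Q\big([W+X,Y],\Lambda X\big)=-Q\big(Y,[W+X,\Lambda X]\big)$, which vanishes for all $Y\in\fr g$ exactly when $[W+X,\Lambda X]=0$.

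I expect the only genuine obstacle to be spotting the correct reductive complement. The ``obvious'' $Q$-orthogonal complement of $\Delta\fr k$ in $\widehat{\fr g}$ with respect to $Q|_{\fr g}\oplus Q|_{\fr k}$ is the twisted subspace $\{(Y+A,-A):Y\in\fr m,\ A\in\fr k\}$, whose identification with $T_eG$ rescales the $\fr k$-part by a factor $2$; pushing the argument through that identification produces a second, spurious-looking constraint on $W$ coming from the $\fr k$-component of the bracket, which one must then show is automatically solvable. Choosing $\widehat{\fr m}=\fr g\oplus\{0\}$ and invoking the geodesic lemma \eqref{geodesiclemma} — which is valid for \emph{any} reductive decomposition — removes that bookkeeping. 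As an independent check one can avoid homogeneous-space machinery entirely: $t\mapsto\exp(t\zeta)\exp(-t\eta)$ with $\eta\in\fr k$ is precisely the integral curve through $e$ of the Killing field generated by $(\zeta,\eta)$, it has velocity $\xi(t)=\op{Ad}_{\exp(t\eta)}Z$ with $Z=\zeta-\eta$, and substituting into the Euler--Arnold geodesic equation $\dot\xi=\op{ad}^{*}_{\xi}\xi$ for left-invariant metrics — using that $\op{ad}_\eta$ is $\langle\ ,\ \rangle$-skew by Lemma \ref{eigenspaceinvariant1} — collapses, after the same $Q$-to-$\langle\ ,\ \rangle$ step, to $[\eta+Z,\Lambda Z]=0$; homogeneity then yields the claim.
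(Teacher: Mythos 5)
Your argument is correct, but it takes a genuinely different route from the paper. The paper does not pass through the geodesic lemma at all: it observes that the orbits of one-parameter subgroups of $G\times K$ through $e$ are the curves $\gamma(t)=\exp(tZ)\exp(-tW)$, and then quotes Proposition 4.3 of \cite{ArCalSo} (on two-step homogeneous geodesics) to say that such a curve is a geodesic if and only if $[Z-W,\Lambda(Z-W)]-\Lambda[Z,W]=0$; setting $X=Z-W$ and using the $\op{ad}_{\fr{k}}$-equivariance of $\Lambda$ (Lemma \ref{eigenspaceinvariant1}) then collapses this to $[W+X,\Lambda X]=0$. You instead realize $(G,g)$ as the Riemannian homogeneous space $\big((G\times K)/\Delta K,g\big)$, choose the $\op{Ad}_{\Delta K}$-invariant complement $\widehat{\fr m}=\fr g\oplus\{0\}$, and apply the Kowalski--Vanhecke lemma \eqref{geodesiclemma} directly; the decisive point — that $[\widehat W+\widehat X,\widehat Y]$ already lies in $\widehat{\fr m}$ so the projection is trivial — is exactly what makes the computation one line, and your remark that this complement is orthogonal for the invariant product $\widehat Q$ disposes of the only possible quibble about the phrase ``orthogonal projection'' in \eqref{geodesiclemma}. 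Your approach buys self-containedness: it uses only a lemma already quoted in the paper rather than the external two-step-geodesic machinery of \cite{ArCalSo}, and it sidesteps the bookkeeping with the twisted orthogonal complement that you correctly identify as the main trap. What the paper's route buys is a uniform framework (the two-step formalism) that the author reuses elsewhere, plus an explicit description of the geodesics themselves as products $\exp(tZ)\exp(-tW)$, which your homogeneous-space argument recovers only implicitly.
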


 The above condition was firstly presented in \cite{NikEin} for compact simple Lie groups and under the assumption that $\fr{k}$ is an \emph{adapted} subalgebra.  Here we provide a proof that bypasses both the simplicity assumption and the condition for $\fr{k}$ to be adapted.  Our proof uses results from \emph{two-step geodesic orbit spaces}, which are defined as those homogeneous spaces whose geodesics are orbits of exponential factors of the form $\gamma(t)=\exp(tZ)\exp(tW)$ (\cite{ArCalSo}, \cite{ArSo}).  \\

\noindent \emph{Proof of Proposition \ref{GoConditionBasic}.}  The group $G\times K$ acts on $G$ by $(x,y)\cdot z:=xzy^{-1}$, where $x,z\in G$, $y\in K$. Recall that the Lie exponential map $\exp_{\fr{g}\times\fr{k}}:\fr{g}\times \fr{k}\rightarrow G\times K$ satisfies $\exp_{\fr{g}\times\fr{k}}(Z,W)=(\exp(Z),\exp(W))$ for $(Z,W)\in \fr{g}\times\fr{k}$, where $\exp:\fr{g}\rightarrow G$ denotes the exponential map on $G$.  On the other hand, $(G,g)$ is a $G\times K$-g.o. manifold if and only if every geodesic $\gamma$ on $G$ has the form $\gamma(t)=\exp_{\fr{g}\times\fr{k}}\big( t(Z,W)\big)\cdot z$. Since the action of $G\times K$ on $G$ is transitive, we lose no generality in assuming $z=e$.  Therefore, and in view of the discussion above, $(G,g)$ is a $G\times K$-g.o. manifold if and only if every geodesic $\gamma$ on $G$ through $e$ has the form

\begin{equation}\label{twostep}\gamma(t)=\exp_{\fr{g}\times\fr{k}}\big( t(Z,W)\big)\cdot e=\exp(tZ)e\exp^{-1}(tW)=\exp(tZ)\exp(-tW).\end{equation}

\noindent Observe that the above curve satisfies $\gamma(0)=e$ and $\dot{\gamma}(0)=Z-W$. We set $X:=Z-W=\dot{\gamma}(0)$.  It follows that $(G,g)$ is a $G\times K$-g.o. manifold if and only if for any $X\in \fr{g}$, there exists a $W\in \fr{k}$ such that the unique geodesic $\gamma$ with $\gamma(0)=e$ and $\dot{\gamma}(0)=X$ has the form \eqref{twostep} with $Z-W=X$.  

Since $W\in \fr{k}$ and the metric $g$ is $G\times K$-invariant, Lemma \ref{eigenspaceinvariant1} implies that the endomorphism $\op{ad}_W$ of $\fr{g}$ is skew-symmetric with respect to the inner product $\langle \ ,\ \rangle$ corresponding to $g$.  Then Proposition 4.3. for $k=0$ in \cite{ArCalSo} asserts that the curve \eqref{twostep} is a geodesic if and only if 

\begin{equation*}[Z-W,\Lambda(Z-W)]-\Lambda[Z,W]=0.\end{equation*}

\noindent Recalling that $X=Z-W=\dot{\gamma}(0)$, observing that $[Z,W]=[X,W]$ and taking into account the $\op{ad}_{\fr{k}}$-equivariance of $\Lambda$ along with the fact that $W\in\fr{k}$, the above equation is equivalent to 

\begin{equation}\label{concludreal}0=[X,\Lambda X]-\Lambda[X,W]=[X,\Lambda X]-[\Lambda X,W]=[W+X,\Lambda X].\end{equation}

We conclude that $(G,g)$ is a $G\times K$-g.o. manifold if and only if for any $X\in \fr{g}$ the unique geodesic $\gamma$ with $\gamma(0)=e$ and $\dot{\gamma}(0)=X$ has the form \eqref{twostep} with $Z-W=X$, i.e if and only if for any $X\in \fr{g}$ there exists a $W\in \fr{k}$ such that Equation \eqref{concludreal} is true. \qed \\

We close this section with an analogue to the normalizer Lemma \ref{NormalizerLemma} for $G\times K$-g.o. metrics. Denote by 

\begin{equation*}\fr{n}_{\fr{g}}(\fr{k})=\{X\in \fr{g}:[X,Y]\in \fr{k}\ \ \makebox{for all} \ \ Y\in \fr{k}\},\end{equation*}

\noindent  the normalizer of $\fr{k}$ in $\fr{g}$. If $K$ is connected, $\fr{n}_{\fr{g}}(\fr{k})$ coincides with the Lie algebra of $N^0(K)$.  

\begin{lemma}\label{NormalizerLemmaNew} 
Let $G$ be a compact Lie group and let $\Lambda\in \op{End}(\fr{g})$ be the metric operator corresponding to a $G\times K$-geodesic orbit metric on $G$, where $K$ is a closed subgroup of $G$.  Then $\Lambda$ is not only $\op{ad}_{\fr{k}}$-equivariant, but also $\op{ad}_{\fr{n}_{\fr{g}}(\fr{k})}$-equivariant.\end{lemma} 

\begin{proof}  

Let $\langle \ , \ \rangle$ be the inner product on $\fr{g}$ corresponding to the $G\times K$-geodesic orbit metric on $G$.  The group $G\times K$ acts transitively and isometrically on $\big(G, \langle \ , \ \rangle\big)$ by $(x,y)\cdot z:=xzy^{-1}$, where $x,z\in G$, $y\in K$.  Then the geodesic orbit manifold $\big(G, \langle \ , \ \rangle\big)$ is isometric to a homogeneous geodesic orbit space $\big((G\times K)/\Delta K, \rho\big)$ where $\Delta K=\{(k,k)\in G\times K: k\in K$ is the diagonal of $K$ and $\rho$ is an $\op{Ad}_{\Delta K}$-invariant inner product on the tangent space of $(G\times K)/\Delta K$ at the origin. We will prove the lemma by applying Lemma \ref{NormalizerLemma} to the geodesic orbit space $\big((G\times K)/\Delta K, \rho \big)$.   

The $\op{Ad}_G$-invariant inner product $Q$ on $\fr{g}$ extends naturally to an $\op{Ad}_{G\times K}$-invariant inner product $\tilde{Q}$ on the Lie algebra $\fr{g}\oplus \fr{k}$ of $G\times K$. In turn, we have the $\tilde{Q}$-orthogonal decomposition  

\begin{equation*}\fr{g}\oplus \fr{k}=\Delta \fr{k}\oplus \tilde{\fr{m}}, \end{equation*}

\noindent where $\Delta \fr{k}=\{(X,X):X\in \fr{k}\}$ is the diagonal of $\fr{k}$ and $\tilde{\fr{m}}=\{(X,-\pi_{\fr{k}}(X)):X\in \fr{g}\}$, with $\pi_{\fr{k}}:\fr{g}\rightarrow \fr{k}$ being the orthogonal projection.  The space $\tilde{\fr{m}}$ can be identified with the tangent space of $(G\times K)/\Delta K$ at the origin and is clearly isomorphic to the Lie algebra $\fr{g}$ (as vector spaces).  Let $\tilde{X}=(X,-\pi_{\fr{k}}(X))$ and $\tilde{Y}=(Y,-\pi_{\fr{k}}(Y))$ be vectors in  $\tilde{\fr{m}}$, where $X,Y \in \fr{g}$.  Then the $\op{Ad}_{\Delta K}$-invariant inner product $\rho$ on  $\tilde{\fr{m}}$ is given by

\begin{equation}\label{makarioxi}\rho(\tilde{X},\tilde{Y})=\langle X,Y\rangle.\end{equation}

\noindent Lemma \ref{NormalizerLemma}, applied to the geodesic orbit space $\big((G\times K)/\Delta K, \rho\big)$, implies that any operator in $\op{ad}_{\fr{n}_{\fr{g}\oplus \fr{k}}(\Delta \fr{k})}$ is skew-symmetric with respect to the inner product $\rho$, where $\fr{n}_{\fr{g}\oplus \fr{k}}(\Delta \fr{k})$ is the normalizer of $\Delta\fr{k}$ in $\fr{g}\times \fr{k}$.  In other words, for any $(Z,W)\in \fr{n}_{\fr{g}\oplus \fr{k}}(\Delta \fr{k})$ and for all $X\in \fr{g}$, Equation \eqref{makarioxi} for the vector $\tilde{X}=(X,-\pi_{\fr{k}}(X))\in \tilde{\fr{m}}$ yields

 \begin{equation}\label{xloup}0=\rho\big(\op{ad}_{(Z,W)}\tilde{X},\tilde{X}\big)=\rho\big((\op{ad}_ZX,-\op{ad}_W\pi_{\fr{k}}(X)),(X,-\pi_{\fr{k}}(X))\big)=\langle \op{ad}_ZX,X\rangle.\end{equation}

  Now consider the $Q$-orthogonal sum (see for example \cite{NikAn}) 
 
 \begin{equation}\label{sum1norm}\fr{n}_{\fr{g}}(\fr{k})=\fr{k}_s\oplus \fr{c}_{\fr{g}}(\fr{k}), \ \ \makebox{where} \ \ \fr{c}_{\fr{g}}(\fr{k})=\{X\in \fr{g}:[X,Y]=0\ \ \makebox{for all} \ \ Y\in \fr{k}\}\end{equation}

\noindent is the centralizer of $\fr{k}$ in $\fr{g}$ and $\fr{k}_s$ is the semisimple part of $\fr{k}$. It is not hard to verify that

\begin{equation}\label{makarioxi2}\fr{n}_{\fr{g}\oplus \fr{k}}(\Delta \fr{k})=\{(Z,W)\in \fr{g}\oplus \fr{k}: Z-W\in \fr{c}_{\fr{g}}(\fr{k})\}.\end{equation}
    
\noindent Moreover, letting $W$ vary over $\fr{k}_s$, decomposition \eqref{sum1norm} and relation \eqref{makarioxi2} imply that for any vector $Z\in \fr{n}_{\fr{g}}(\fr{k})$, there exists a vector of the form $(Z,W)\in \fr{n}_{\fr{g}\oplus \fr{k}}(\Delta \fr{k})$.  Applying Equation \eqref{xloup} for the arbitrary vector $Z\in \fr{n}_{\fr{g}}(\fr{k})$, we deduce that $\langle \op{ad}_ZX,X\rangle=0$ for all $X\in \fr{g}$ and thus any operator in $\op{ad}_{\fr{n}_{\fr{g}}(\fr{k})}$ is skew-symmetric with respect to the inner product $\langle \ , \ \rangle$, yielding the conclusion of the lemma.  \end{proof}
 
A Corollary of Lemma \ref{NormalizerLemmaNew} is the following.

\begin{corol}\label{yields}Let $(G,g)$ be a compact simple Riemannian Lie group where $g$ is a left-invariant geodesic orbit metric. Let $\fr{g}\times \fr{k}$ be the Lie algebra of the isometry group of $(G,g)$.  Then $\fr{k}$ is a self-normalizing subalgebra of $\fr{g}$.\end{corol} 

\begin{proof} By Lemma \ref{NormalizerLemmaNew}, the metric endomorphism $\Lambda$ of $g$ is $\op{ad}_{\fr{n}_{\fr{g}}(\fr{k})}$-equivariant. Equivalently, any operator in $\op{ad}_{\fr{n}_{\fr{g}}(\fr{k})}$ is skew-symmetric with respect to the corresponding inner product $\langle \ ,\ \rangle$ of $g$.  Since $\fr{k}$ is the maximal subalgebra of $\fr{g}$ such that any operator $\op{ad}_X$, $X\in \fr{k}$, is skew-symmetric with respect to $\langle \ ,\ \rangle$ (see for example the proof of Theorem 7 in \cite{DaZi}), it follows that $\fr{k}=\fr{n}_{\fr{g}}(\fr{k})$.\end{proof}

\section{Regular subgroups and their properties}\label{sec3}

Let $G$ be a compact Lie group with Lie algebra $\fr{g}$ and let $K$ be a closed connected regular subgroup of $G$ (Definition \ref{defregular}) with Lie algebra $\fr{k}$.  We fix an $\op{Ad}$-invariant inner product $Q$ on $\fr{g}$. Let $\fr{n}_{\fr{g}}(\fr{k})$ be the normalizer of $\fr{k}$ in $\fr{g}$ and consider the $Q$-orthogonal reductive decomposition

\begin{equation}\label{E3}\fr{g}= \fr{n}_{\fr{g}}(\fr{k})\oplus \fr{p}.\end{equation}

 \noindent Here $\fr{p}$ coincides with the tangent space $T_o(G/N^0(K))$, where $N^0(K)$ is the identity component of the normalizer of $K$ in $G$. We have the following property.

\begin{lemma}\label{key} Let $K$ be a regular subgroup of $G$ and let $\fr{n}_{\fr{g}}(\fr{k})$ be the normalizer of $\fr{k}$ in $\fr{g}$.  Then any non-zero $\op{ad}_{\fr{n}_{\fr{g}}(\fr{k})}$-submodule of $\fr{n}_{\fr{g}}(\fr{k})$ is inequivalent to any non-zero $\op{ad}_{\fr{n}_{\fr{g}}(\fr{k})}$-submodule of $\fr{p}$.
\end{lemma}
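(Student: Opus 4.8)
The plan is to exploit the regularity of $K$ by fixing a Cartan subalgebra $\fr{t}$ of $\fr{g}$ with $[\fr{k},\fr{t}]\subseteq\fr{k}$, and then to read off the $\op{ad}_{\fr{n}_{\fr{g}}(\fr{k})}$-module structure of $\fr{g}$ from the root space decomposition relative to $\fr{t}$. First I would complexify everything: write $\fr{g}^{\C}=\fr{t}^{\C}\oplus\bigoplus_{\alpha\in\Delta}\fr{g}_\alpha$ for the root decomposition, and observe that since $\fr{k}^{\C}$ is normalized by $\fr{t}^{\C}$, it is $\op{ad}_{\fr{t}^{\C}}$-invariant and hence a sum of root spaces together with a piece of $\fr{t}^{\C}$: there is a subset $\Delta_{\fr{k}}\subseteq\Delta$ with $\fr{k}^{\C}=\fr{s}\oplus\bigoplus_{\alpha\in\Delta_{\fr{k}}}\fr{g}_\alpha$ where $\fr{s}\subseteq\fr{t}^{\C}$. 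The key point is that $\fr{t}^{\C}\subseteq\fr{n}_{\fr{g}}(\fr{k})^{\C}$ (because $\fr{t}$ normalizes $\fr{k}$), so the weights of the $\op{ad}_{\fr{t}^{\C}}$-action — which on each root space $\fr{g}_\alpha$ is just the root $\alpha$ — are already invariants of the $\op{ad}_{\fr{n}_{\fr{g}}(\fr{k})}$-module structure. Concretely, for $\fr{t}$-invariant subspaces $U,V$, if $U$ and $V$ share no common $\fr{t}$-weight then no $\op{ad}_{\fr{t}}$-equivariant (a fortiori no $\op{ad}_{\fr{n}_{\fr{g}}(\fr{k})}$-equivariant) map $U\to V$ can be nonzero; so it suffices to show $\fr{k}$ and $\fr{p}$ have disjoint sets of $\fr{t}$-weights.

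Second, I would identify the weights of $\fr{p}$. By definition $\fr{p}$ is the $Q$-orthogonal complement of $\fr{n}_{\fr{g}}(\fr{k})=\fr{k}\oplus\fr{c}_{\fr{m}}(\fr{k})$ in $\fr{g}$, and $\fr{n}_{\fr{g}}(\fr{k})$ is $\fr{t}$-invariant (it contains $\fr{t}$ and is normalized by it), hence so is $\fr{p}$. Complexifying, $\fr{n}_{\fr{g}}(\fr{k})^{\C}$ contains all of $\fr{t}^{\C}$ and a set of root spaces $\fr{g}_\alpha$, $\alpha\in\Delta_{\fr{n}}$, and $\fr{p}^{\C}=\bigoplus_{\alpha\in\Delta\setminus\Delta_{\fr{n}}}\fr{g}_\alpha$. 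In particular every $\fr{t}$-weight of $\fr{p}$ is a nonzero root $\alpha\notin\Delta_{\fr{n}}\supseteq\Delta_{\fr{k}}$, so no weight of $\fr{p}$ is a weight of $\fr{k}$ coming from $\Delta_{\fr{k}}$. The one subtlety is the possible contribution of $\fr{s}\subseteq\fr{t}^{\C}$ to $\fr{k}$: that part carries the zero weight. But $\fr{p}^{\C}$ is a sum of genuine (nonzero-root) root spaces and contains no zero-weight vectors, so the zero weight of $\fr{k}$ also fails to meet $\fr{p}$. Hence the weight sets are disjoint, and by the remark above every $\op{ad}_{\fr{n}_{\fr{g}}(\fr{k})}$-equivariant map between a submodule of $\fr{k}$ and a submodule of $\fr{p}$ vanishes; equivalently, no nonzero submodule of $\fr{k}$ is equivalent to a nonzero submodule of $\fr{p}$.

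The main obstacle I anticipate is bookkeeping the passage between the real representation $\op{ad}_{\fr{n}_{\fr{g}}(\fr{k})}$ on the real Lie algebra $\fr{g}$ and its complexification: equivalence of real $\op{ad}_{\fr{n}_{\fr{g}}(\fr{k})}$-modules must be related to the complex weight data, and one has to be careful that the argument "disjoint weights $\Rightarrow$ no equivariant isomorphism" survives the real/complex translation (two inequivalent complex modules can become equivalent over $\R$ only in controlled ways, and here the weight-disjointness passes through because $\fr{t}$ is fixed and common to the normalizers of both pieces). A clean way to handle this is to argue entirely on the level of $\op{ad}_{\fr{t}}$: a nonzero $\op{ad}_{\fr{n}_{\fr{g}}(\fr{k})}$-equivariant map is in particular $\op{ad}_{\fr{t}}$-equivariant, and $\op{ad}_{\fr{t}}$ acts on $\fr{g}$ (real form) with a decomposition into the zero-weight space $\fr{t}$ and two-dimensional "real root spaces" indexed by pairs $\{\pm\alpha\}$; the real root spaces appearing in $\fr{k}$ and in $\fr{p}$ are then disjoint as sets of $\op{ad}_{\fr{t}}$-isotypic pieces by the complex computation above, so no nonzero $\op{ad}_{\fr{t}}$-equivariant — hence no $\op{ad}_{\fr{n}_{\fr{g}}(\fr{k})}$-equivariant — map between them exists. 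I would present the proof this way to keep the real structure manifest and avoid delicate complexification arguments.
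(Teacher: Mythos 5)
Your proof is correct, but it reaches the conclusion by a genuinely different (more computational) route than the paper's. The paper argues by contradiction and never invokes the root space decomposition: given an $\op{ad}_{\fr{n}_{\fr{g}}(\fr{k})}$-equivariant isomorphism $\phi:\fr{n}\rightarrow\fr{q}$ with $\fr{n}\subseteq\fr{k}$ and $\fr{q}\subseteq\fr{p}$, it picks a nonzero $X\in\fr{n}$ and chooses the Cartan subalgebra \emph{adapted to} $X$, namely a maximal abelian subalgebra $\fr{t}$ of $\fr{n}_{\fr{g}}(\fr{k})$ containing $X$, which by regularity is a Cartan subalgebra of $\fr{g}$. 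Equivariance then gives $[\fr{t},\phi(X)]=\phi([\fr{t},X])=0$, so $\fr{t}\oplus\op{span}_{\mathbb R}\{\phi(X)\}$ would be abelian, contradicting maximality of $\fr{t}$. In effect the paper only ever uses the zero-weight statement ``$\fr{p}$ contains no nonzero vector centralizing a Cartan $\fr{t}\subseteq\fr{n}_{\fr{g}}(\fr{k})$'', because the Cartan is moved so that the chosen vector has weight zero. You instead fix one Cartan once and for all and compare the full weight sets, which obliges you to track the nonzero roots ($\Delta_{\fr{k}}\subseteq\Delta_{\fr{n}}$ versus $\Delta\setminus\Delta_{\fr{n}}$) and to carry out the real-versus-complex bookkeeping you rightly flag as the delicate point (in particular the symmetries $\Delta_{\fr{k}}=-\Delta_{\fr{k}}$ and $\Delta_{\fr{n}}=-\Delta_{\fr{n}}$, coming from conjugation-stability of $\fr{k}^{\C}$ and $\fr{n}_{\fr{g}}(\fr{k})^{\C}$, are needed to match the real root spaces $\fr{g}_{[\alpha]}$). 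What your version buys is a slightly stronger and more structural conclusion: there is no nonzero $\op{ad}_{\fr{t}}$-equivariant map at all between submodules of $\fr{k}$ and of $\fr{p}$, and you see explicitly how the isotypic decomposition refines $\fr{k}\oplus\fr{c}_{\fr{m}}(\fr{k})\oplus\fr{p}$. What the paper's version buys is brevity and the avoidance of root systems altogether. Both arguments ultimately rest on the same two facts supplied by regularity: $\fr{t}\subseteq\fr{n}_{\fr{g}}(\fr{k})$, and $\fr{p}$ meets the centralizer of $\fr{t}$ only in $\{0\}$.
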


\begin{proof} If $\fr{n}_{\fr{g}}(\fr{k})=\{0\}$, the result holds trivially.  Assume that $\fr{n}_{\fr{g}}(\fr{k})\neq \{0\}$ and suppose on the contrary that there exist equivalent non-zero $\op{ad}_{\fr{n}_{\fr{g}}(\fr{k})}$-submodules $\fr{n}\subseteq \fr{n}_{\fr{g}}(\fr{k})$ and $\fr{q}\subseteq \fr{p}$. In other words, there exists an $\op{ad}_{\fr{n}_{\fr{g}}(\fr{k})}$-equivariant isomorphism $\phi:\fr{n}\rightarrow \fr{q}$.  Let $X\in \fr{n}\subseteq \fr{n}_{\fr{g}}(\fr{k})$ be a non-zero vector and let $\phi(X)$ be its non-zero image in $\fr{p}$. Moreover, let $\fr{t}$ be a Cartan subalgebra of $\fr{n}_{\fr{g}}(\fr{k})$ such that $X\in \fr{t}$.  Since $K$ is a regular subgroup of $G$, $\fr{n}_{\fr{g}}(\fr{k})$ has maximal rank and hence $\fr{t}$ is also a Cartan subalgebra of $\fr{g}$. 

Now let $H\in \fr{t}\subseteq \fr{n}_{\fr{g}}(\fr{k})$ be arbitrary.  The $\op{ad}_{\fr{n}_{\fr{g}}(\fr{k})}$-equivariance of $\phi$, along with the fact that $H,X\in \fr{t}$, imply that $[H, \phi(X)]=\phi\big([H,X]\big)=\phi(0)=0$. Since $H$ is arbitrary, we deduce that 

\begin{equation}\label{sfaira3}[\fr{t}, \phi(X)]=0.\end{equation}

\noindent  On the other hand, $\phi(X)$ is a non-zero vector in $\fr{p}$ which, by relation \eqref{E3}, is orthogonal to $\fr{t}\subseteq \fr{n}_{\fr{g}}(\fr{k})$.  From Equation \eqref{sfaira3}, we conclude that $\fr{t}\oplus \op{span}_{\mathbb R}\{\phi(X)\}$ is an abelian subalgebra of $\fr{g}$.  The last statement yields a contradiction, given that $\fr{t}$ is a maximal abelian subalgebra of $\fr{g}$. \end{proof}

Combined with Lemma \ref{equiv}, Lemma \ref{key} is fundamental in simplifying the necessary form of the $G\times K$-g.o. metrics on Lie groups and thus proving our main results.  This is reflected in the following proof of Theorem \ref{split}, which we firstly restate in terms of metric endomorphisms.

\begin{theorem}\label{split2}Let $G$ be a compact Lie group with Lie algebra $\fr{g}$ and let $K$ be a connected regular subgroup of $G$ with Lie algebra $\fr{k}$.  Consider an orthogonal decomposition $\fr{g}=\fr{n}_{\fr{g}}(\fr{k})\oplus \fr{p}$ with respect to an $\op{Ad}$-invariant inner product $Q$ on $\fr{g}$.  Then the metric endomorphism $\Lambda=\Lambda_Q\in\op{End}(\fr{g})$ of any $G\times K$-g.o. metric $g$ on $G$ admits the block-diagonal form

\begin{equation*}\Lambda=\begin{pmatrix} \left.\Lambda\right|_{\fr{n}_{\fr{g}}(\fr{k})} & 0\\
0 & \left.\Lambda\right|_{\fr{p}}\end{pmatrix},\end{equation*}

\noindent with respect to the decomposition $\fr{g}=\fr{n}_{\fr{g}}(\fr{k})\oplus \fr{p}$, where the block $\left.\Lambda\right|_{\fr{n}_{\fr{g}}(\fr{k})}\in \op{End}(\fr{n}_{\fr{g}}(\fr{k}))$ defines a bi-invariant metric on the group $N^0(K)$ and the block $\left.\Lambda\right|_{\fr{p}}\in \op{End}(\fr{p})$ defines a $G$-g.o. metric on the homogeneous space $G/N^0(K)$. \end{theorem}

\begin{proof} From the $Q$-orthogonal reductive decomposition $\fr{g}=\fr{n}_{\fr{g}}(\fr{k})\oplus \fr{p}$, it follows that both spaces $\fr{n}_{\fr{g}}(\fr{k})$ and $\fr{p}$ are $\op{ad}_{\fr{n}_{\fr{g}}(\fr{k})}$-invariant.  Moreover, Lemma \ref{key} asserts that any non-zero $\op{ad}_{\fr{n}_{\fr{g}}(\fr{k})}$-submodule of $\fr{n}_{\fr{g}}(\fr{k})$ is inequivalent to any non-zero $\op{ad}_{\fr{n}_{\fr{g}}(\fr{k})}$-submodule of $\fr{p}$.  Finally, Lemma \ref{NormalizerLemmaNew} implies that the operator $\Lambda$ is $\op{ad}_{\fr{n}_{\fr{g}}(\fr{k})}$-equivariant.  Applying Lemma \ref{equiv}, we deduce that the $\op{ad}_{\fr{n}_{\fr{g}}(\fr{k})}$-equivariant endomorphism $\Lambda$ leaves both spaces $\fr{n}_{\fr{g}}(\fr{k})$ and $\fr{p}$ invariant.  Therefore, $\Lambda$ admits the block-diagonal form 
 
 \begin{equation*}\Lambda=\begin{pmatrix} \left.\Lambda\right|_{\fr{n}_{\fr{g}}(\fr{k})} & 0\\
0 & \left.\Lambda\right|_{\fr{p}}\end{pmatrix}.\end{equation*}

The endomorphism $\left.\Lambda\right|_{\fr{n}_{\fr{g}}(\fr{k})}$ is $\op{ad}_{\fr{n}_{\fr{g}}(\fr{k})}$-equivariant and thus it defines a bi-invariant metric on $N^0(K)$. Moreover, the $\op{ad}_{\fr{n}_{\fr{g}}(\fr{k})}$-equivariant endomorphism $\left.\Lambda\right|_{\fr{p}}$ defines a $G$-invariant metric on the homogeneous space $G/N^0(K)$.  We will show that $\left.\Lambda\right|_{\fr{p}}$ is a $G$-g.o. metric on $G/N^0(K)$.  Indeed, since $\Lambda$ defines a $G\times K$-g.o. metric on $G$, Proposition \ref{GoConditionBasic} implies that for any $X\in \fr{p}\subseteq \fr{g}$ there exists a vector $W=W(X)\in \fr{k}\subseteq \fr{n}_{\fr{g}}(\fr{k})$ such that 

\begin{equation*}0=[W+X,\Lambda X]=[W+X,\left.\Lambda\right|_{\fr{p}} X].\end{equation*}

\noindent In view of Proposition \ref{GOCond}, the above equation implies that the $\op{ad}_{\fr{n}_{\fr{g}}(\fr{k})}$-equivariant endomorphism $\left.\Lambda\right|_{\fr{p}}$ defines a $G$-g.o. metric on $G/N^0(K)$. \end{proof}

From Proposition \ref{DaZiBi}, it follows that the bi-invariant metric $\left.\Lambda\right|_{\fr{n}_{\fr{g}}(\fr{k})}$ has the form

\begin{equation*}\label{XCD1}\left.\Lambda\right|_{\fr{n}_{\fr{g}}(\fr{k})}=\begin{pmatrix} 
 \left.\Lambda\right|_{\fr{z}(\fr{n}_{\fr{g}}(\fr{k}))} & 0 & \cdots &0\\
 0& \left.\lambda_1\op{Id}\right|_{\fr{n}_1} &\cdots &0\\
  \vdots & \cdots & \ddots &\vdots\\
  0&\cdots &\cdots &\left.\lambda_s\op{Id}\right|_{\fr{n}_s}
  \end{pmatrix},\end{equation*}
  
  \noindent where $\fr{z}(\fr{n}_{\fr{g}}(\fr{k}))$ is the center of $\fr{n}_{\fr{g}}(\fr{k})$ and $\fr{n}_j$, $j=1,\dots, s$, are the simple ideals of $\fr{n}_{\fr{g}}(\fr{k})$. Hence we arrive to the following conclusion of Theorem \ref{split2}.

\begin{corol}\label{ulcorol}
Let $G$ be a compact Lie group with Lie algebra $\fr{g}$ and let $K$ be a regular subgroup of $G$ with Lie algebra $\fr{k}$.  Consider an orthogonal decomposition $\fr{g}=\fr{n}_{\fr{g}}(\fr{k})\oplus \fr{p}$ with respect to an $\op{Ad}$-invariant inner product $Q$ on $\fr{g}$.  Then the metric endomorphism $\Lambda=\Lambda_Q\in\op{End}(\fr{g})$ of any $G\times K$-g.o. metric $g$ on $G$ admits the block-diagonal form

\begin{equation*}\label{metricendomorphismnaturally1}\Lambda=\begin{pmatrix} 
 \left.\Lambda\right|_{\fr{z}(\fr{n}_{\fr{g}}(\fr{k}))} & 0 & \cdots &0 &0\\
 0& \left.\lambda_1\op{Id}\right|_{\fr{n}_1} &\cdots &0 &0\\
  \vdots & \cdots & \ddots &\vdots &\vdots\\
  0&\cdots &\cdots &\left.\lambda_s\op{Id}\right|_{\fr{n}_s} &0\\
  0&\cdots &\cdots &0 &\left.\Lambda\right|_{\fr{p}}
  \end{pmatrix},
\end{equation*}

\noindent where the block $\left.\Lambda\right|_{\fr{p}}$ defines a $G$-g.o. metric on the homogeneous space $G/N^0(K)$.\end{corol}

\section{Proof of Theorem \ref{consequence}}\label{sec4.5}

\subsection{Some preliminary results}

Before we prove Theorem \ref{consequence}, we need the following three preliminary results. 

\begin{lemma}\label{aux}Let $(G,g)$ be a connected compact Riemannian simple Lie group and assume that the metric $g$ is $G\times K$-invariant, where $K$ is a connected Lie subgroup of $G$. Then the isometry group of $(G,g)$ is locally isomorphic to $G\times K^{\prime}$, where $K^{\prime}$ is a connected closed subgroup of $G$ such that $K\subseteq K^{\prime}$.\end{lemma}

\begin{proof} Let $\fr{k}$ be the Lie algebra of $K$.  As a result of the Ochiai-Takahashi Theorem (\cite{Ot}), the Lie algebra of the connected isometry group $I^0$ of $(G,g)$ has the form $\fr{g}\times \fr{k}^{\prime}$, where $\fr{k}^{\prime}$ is a subalgebra of $\fr{g}$.  Let $K^{\prime}$ be the connected subgroup of $G$ with Lie algebra $\fr{k}^{\prime}$ so that $I^0$ is locally isomorphic to $G\times K^{\prime}$.  The Lie algebra $\fr{k}^{\prime}$ is the maximal subalgebra of $\fr{g}$ such that any operator $\op{ad}_X$, $X\in \fr{k}^{\prime}$, is skew-symmetric with respect to the corresponding inner product $\langle \ ,\ \rangle$ to $g$.  Since the metric $g$ is $G\times K$-invariant, it follows that $G\times K$ acts by isometries on $(G,g)$ and thus any operator $\op{ad}_X$, $X\in \fr{k}$, is skew-symmetric with respect to $\langle \ ,\ \rangle$ (Lemma \ref{eigenspaceinvariant1}).  By the maximality of $\fr{k}^{\prime}$, we conclude that $\fr{k}\subseteq \fr{k}^{\prime}$. Since $K$ is connected, it follows that $K\subseteq K^{\prime}$.   \end{proof}

\begin{lemma}\label{aux1}
Let $(G,g)$ be a connected compact Riemannian simple Lie group and let $G\times K$ be its connected isometry group up to local isomorphism.  Assume that $K$ has maximal rank in $G$ and the universal cover $\widetilde{G}/\widetilde{K}$ of $G/K$ is not $SO(2l+1)/U(l)$ or $Sp(l)/(U(1)\times Sp(l-1))$. Then $(G,g)$ is a geodesic orbit manifold if and only if it is a naturally reductive manifold.
\end{lemma}

\begin{proof} If $(G,g)$ is naturally reductive then it is clearly geodesic orbit.  Conversely, assume that $(G,g)$ is a g.o. manifold.  Since $K$ has maximal rank in $G$, it is a regular subgroup of $G$.  Moreover, the Lie algebra $\fr{k}$ of $K$ is self-normalizing in $\fr{g}$, i.e. $\fr{k}=\fr{n}_{\fr{g}}(\fr{k})$. From Corollary \ref{ulcorol}, it follows that the metric endomorphism $\Lambda=\Lambda_Q\in\op{End}(\fr{g})$ of $g$ admits the block-diagonal form

\begin{equation}\label{metricendomorphismnaturally1}\Lambda=\begin{pmatrix} 
 \left.\Lambda\right|_{\fr{z}(\fr{k})} & 0 & \cdots &0 &0\\
 0& \left.\lambda_1\op{Id}\right|_{\fr{k}_1} &\cdots &0 &0\\
  \vdots & \cdots & \ddots &\vdots &\vdots\\
  0&\cdots &\cdots &\left.\lambda_s\op{Id}\right|_{\fr{k}_s} &0\\
  0&\cdots &\cdots &0 &\left.\Lambda\right|_{\fr{p}}
  \end{pmatrix},
\end{equation}

\noindent where $\fr{z}(\fr{k})$ is the center of $\fr{k}$, $\fr{k}_1,\cdots ,\fr{k}_s$ are the simple ideals of $\fr{k}$, and the block $\left.\Lambda\right|_{\fr{p}}$ defines a $G$-g.o. metric on the homogeneous space $G/N^0(K)=G/K$. 

 Given that $\widetilde{G}/\widetilde{K}$ is not $SO(2l+1)/U(l)$ or $Sp(l)/(U(1)\times Sp(l-1))$, the classification in \cite{AlNik} implies that any $\widetilde{G}$-invariant g.o. metric on $\widetilde{G}/\widetilde{K}$ is standard. From Proposition \ref{UnivCover}, we conclude that any $G$-invariant g.o. metric on $G/K$ is standard. Therefore,

\begin{equation*} \left.\Lambda\right|_{\fr{p}}=\lambda\left.\op{Id}\right|_{\fr{p}}.\end{equation*}

\noindent Substituting the above equation into relation \eqref{metricendomorphismnaturally1} and taking into account Theorem \ref{naturallyDaZi} (for $\fr{m}=\fr{p}$), we conclude that the metric $\Lambda$ is naturally reductive.  Therefore, $(G,g)$ is a naturally reductive manifold.\end{proof} 

Finally, we will need the main result in \cite{CheCheWo}. Before we state the result, consider a compact simple Lie group $G$ and a connected subgroup $K$ of $G$ such that $G/K$ is a \emph{generalized flag manifold}, i.e. $K$ is the centralizer of a torus in $G$. Consider the decomposition 

\begin{equation*}\fr{g}=\fr{k}\oplus \fr{p}=\underbrace{\fr{z}(\fr{k})\oplus \fr{k}_1\oplus \cdots \oplus \fr{k}_s}_{\fr{k}}\oplus \underbrace{\fr{p}_1\oplus \cdots \oplus \fr{p}_l}_{\fr{p}},\end{equation*}

\noindent where $\fr{z}(\fr{k})$ is the center of $\fr{k}$, $\fr{k}_j$, $j=1,\dots ,s$, are the simple ideals of $\fr{k}$ and $\fr{p}_1\oplus \cdots \oplus \fr{p}_l$ is the decomposition of $\fr{p}$ into irreducible and inequivalent $\op{Ad}_K$-submodules.  Since $K$ has maximal rank in $G$, it is a regular subgroup of $G$ and $\fr{n}_{\fr{g}}(\fr{k})=\fr{k}$. It follows from Corollary \ref{ulcorol} that any $G\times K$-g.o. metric $g$ on $G$ (in fact the corresponding metric endomorphism $\Lambda$) has the form \eqref{metricendomorphismnaturally1}.  Moreover, since the submodules $\fr{p}_i$ are irreducible and pairwise inequivalent, decomposition $\fr{p}_1\oplus \cdots \oplus \fr{p}_l$ coincides with the isotypic decomposition of $\left.\op{Ad}\right|_{K}:K\rightarrow \op{Gl}(\fr{p})$. Therefore, $\left.\Lambda \right|_{\fr{p}}$ has the diagonal form $\left.\Lambda \right|_{\fr{p}}=\begin{pmatrix}\mu_1\left.\op{Id}\right|_{\fr{p}_1} &\cdots & 0\\
\vdots & \ddots &\vdots\\
0 & \cdots & \mu_l\left.\op{Id}\right|_{\fr{p}_l}\end{pmatrix}$.  We conclude that the inner product $\langle \ ,\ \rangle$ corresponding to the g.o. metric $g$ has the form

\begin{equation}\label{CheWoform}\langle \ ,\ \rangle=\left.( \ , \ )\right|_{\fr{z}(\fr{k}) \times \fr{z}(\fr{k})}+\left.\lambda_1 Q\right|_{\fr{k}_1\times \fr{k}_1}+\cdots +\left.\lambda_s Q\right|_{\fr{k}_s\times \fr{k}_s}+\left.\mu_1 Q\right|_{\fr{p}_1\times \fr{p}_1}+ \cdots + \left.\mu_l Q\right|_{\fr{p}_l\times \fr{p}_l},\end{equation}

\noindent where $Q$ is the negative of the Killing form of $\fr{g}$ and $\left.( \ , \ )\right|_{\fr{z}(\fr{k}) \times \fr{z}(\fr{k})}$ is any inner product on $\fr{z}(\fr{k})$.  The main result in \cite{CheCheWo} asserts that any g.o. metric of the form \eqref{CheWoform} is $G\times K$-naturally reductive.  On the other hand, the above discussion shows that the metrics \eqref{CheWoform} exhaust the $G\times K$-g.o. metrics on $G$.  Thus the main result in \cite{CheCheWo} can be restated as follows.

\begin{theorem}\label{CheWotheorem} Let $(G,g)$ be a connected compact Riemannian simple Lie group and assume that the metric $g$ is $G\times K$-invariant, where $K$ is a connected subgroup of $G$ such that $G/K$ is a generalized flag manifold.  Then $(G,g)$ is a $G\times K$-g.o. manifold if and only if it is a $G\times K$-naturally reductive manifold.
\end{theorem}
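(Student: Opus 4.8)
The plan is to derive the statement essentially for free from Corollary \ref{ulcorol} and the classification cited from \cite{CheCheWo}, since the theorem is a reformulation of that result adapted to the present setting. The direction in which natural reductivity implies the geodesic orbit property is immediate: comparing the geodesic lemma condition \eqref{geodesiclemma} (taken with $W=0$) with the natural reductivity condition \eqref{natredcond} shows that every $G\times K$-naturally reductive metric is $G\times K$-geodesic orbit.

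For the converse, I would first record that, since $G/K$ is a generalized flag manifold, $K$ is the centralizer of a torus in $G$ and in particular has maximal rank; hence $K$ is a regular, and therefore weakly regular, subgroup of $G$, and $\fr{k}$ is self-normalizing in $\fr{g}$ by Remark \ref{self}. Consequently the hypotheses of Corollary \ref{ulcorol} are satisfied, so the metric endomorphism $\Lambda=\Lambda_Q$ (with $Q$ the negative of the Killing form of $\fr{g}$) of any $G\times K$-invariant g.o. metric $g$ on $G$ has the block-diagonal form \eqref{metricendomorphismnaturally1} with respect to $\fr{g}=\fr{z}(\fr{k})\oplus\fr{k}_1\oplus\cdots\oplus\fr{k}_s\oplus\fr{m}$, where $\left.\Lambda\right|_{\fr{m}}$ defines a $G$-invariant g.o. metric on $G/K$.

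Next I would identify $\left.\Lambda\right|_{\fr{m}}$ explicitly. For a flag manifold the isotropy representation splits as $\fr{m}=\fr{m}_1\oplus\cdots\oplus\fr{m}_l$ into irreducible and pairwise inequivalent $\op{Ad}_K$-submodules, so this decomposition is precisely the isotypic decomposition of $\left.\op{Ad}\right|_K$ on $\fr{m}$; applying the block-diagonal form \eqref{blockdiaginitial} to these irreducible isotypic components forces $\left.\Lambda\right|_{\fr{m}_i}=\mu_i\op{Id}$. Hence the inner product of $g$ is exactly of the shape \eqref{CheWoform}, and since every inner product of that shape is visibly $G\times K$-invariant, the metrics \eqref{CheWoform} are precisely the $G\times K$-invariant g.o. metrics on $G$. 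Invoking the main theorem of \cite{CheCheWo} — which asserts that every g.o. metric of the form \eqref{CheWoform} is $G\times K$-naturally reductive — then finishes the argument.

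The only substantive ingredient is external, namely the \cite{CheCheWo} result that g.o. metrics of the shape \eqref{CheWoform} on generalized flag manifolds are naturally reductive; everything done here is structural bookkeeping, reducing an arbitrary $G\times K$-invariant g.o. metric to that normal form by means of Corollary \ref{ulcorol} and the standard decomposition of the isotropy representation of a flag manifold into irreducible pairwise inequivalent summands.
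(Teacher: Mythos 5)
Your proposal is correct and follows essentially the same route as the paper: the paper likewise notes that $K$ has maximal rank (being the centralizer of a torus), applies Corollary \ref{ulcorol} to get the block form \eqref{metricendomorphismnaturally1}, uses the irreducibility and pairwise inequivalence of the isotropy summands of a flag manifold to force $\left.\Lambda\right|_{\fr{m}_i}=\mu_i\op{Id}$ and hence the normal form \eqref{CheWoform}, and then invokes the main result of \cite{CheCheWo}. No substantive differences.
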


We are ready to proceed to the main proof.

\subsection{Proof of Theorem \ref{consequence}}

 It suffices to assume that $K$ is connected.  Indeed, let $K^0$ be the identity component of $K$. If the metric $g$ is $G\times K$-invariant then it is also $G\times K^0$-invariant, while if $g$ is $G\times K$-g.o. then it is also $G\times K^0$-g.o, (c.f. Remark \ref{iinvariant}).
 
  Assume that $(G,g)$ is $G\times K$-geodesic orbit.  Since the metric $g$ is $G\times K$-invariant, Lemma \ref{aux} asserts that the connected isometry group of $(G,g)$ is locally isomorphic to $G\times K^{\prime}$, where $K^{\prime}$ is a connected closed subgroup of $G$ such that $K\subseteq K^{\prime}$. If $K^{\prime}=G$ then the metric $g$ is bi-invariant and hence Theorem \ref{consequence} follows.  Assume henceforth that $K^{\prime}\subsetneq G$.  The isotropy subgroup of $e$ with respect to the action of $G\times K^{\prime}$ on $G$ is the diagonal $\Delta K^{\prime}=\{(x,x)\in G\times K^{\prime}: x\in K^{\prime}\}$.  Then $G$ is diffeomorphic to the homogeneous space $(G\times K^{\prime})/\Delta K^{\prime}$. 
 
 Since $g$ is $G\times K$-geodesic orbit, Lemma \ref{NormalizerLemmaNew} implies that the corresponding metric endomorphism $\Lambda$ is $\op{ad}_{\fr{n}_{\fr{g}}(\fr{k})}$-equivariant. The Lie algebra $\fr{k}^{\prime}$ of $K^{\prime}$ is the maximal subalgebra of $\fr{g}$ such that $\Lambda$ is $\op{ad}_{\fr{k}^{\prime}}$-equivariant, and hence $\fr{n}_{\fr{g}}(\fr{k})\subseteq \fr{k}^{\prime}$.  On the other hand, since $K$ is regular, $\fr{n}_{\fr{g}}(\fr{k})$ contains a Cartan subalgebra of $\fr{g}$ and hence has maximal rank in $\fr{g}$.  We conclude that $K^{\prime}$ also has maximal rank in $G$. If the universal cover $\widetilde{G}/\widetilde{K^{\prime}}$ of $G/K^{\prime}$ is not $SO(2l+1)/U(l)$ or $Sp(l)/(U(1)\times Sp(l-1))$ then Theorem \ref{consequence} follows from Lemma \ref{aux1}.
 
   Now assume that the universal cover $\widetilde{G}/\widetilde{K^{\prime}}$ of $G/K^{\prime}$ is either $SO(2l+1)/U(l)$ or $Sp(l)/(U(1)\times Sp(l-1))$.  Then $\widetilde{G}/\widetilde{K^{\prime}}$ is a generalized flag manifold (\cite{AlAr}) and hence Theorem \ref{CheWotheorem} implies that any $\widetilde{G}\times \widetilde{K^{\prime}}$-g.o. metric $\mu$ on $\widetilde{G}$ is $\widetilde{G}\times \widetilde{K^{\prime}}$-naturally reductive (we recall that $\widetilde{G}$ denotes the universal covering group of $G$ while $\widetilde{K^{\prime}}$ denotes the identity component of $\pi^{-1}(K^{\prime})$ where $\pi:\widetilde{G}\rightarrow G$ is the canonical projection). 
   
   On the other hand, the universal covering group $\widetilde{G\times K^{\prime}}$ of $G\times K^{\prime}$ acts transitively and isometrically on $\widetilde{G}=\widetilde{G\times K^{\prime}}/\widetilde{\Delta K^{\prime}}$, while the Lie algebra $\fr{g}\oplus \fr{k}^{\prime}$ of $\widetilde{G\times K^{\prime}}$ coincides with the Lie algebra of $\widetilde{G}\times \widetilde{K^{\prime}}$. Remark \ref{iinvariant} then implies that $(\widetilde{G},\mu)$ is a $\widetilde{G\times K^{\prime}}$-g.o. manifold if and only if it is a $\widetilde{G}\times \widetilde{K^{\prime}}$-g.o. manifold.  Similarly, Remark \ref{iinvariant2} implies that $(\widetilde{G},\mu)$ is a $\widetilde{G\times K^{\prime}}$-naturally reductive manifold if and only if it is a $\widetilde{G}\times \widetilde{K^{\prime}}$-naturally reductive manifold.  Given that any $\widetilde{G}\times \widetilde{K^{\prime}}$-invariant g.o. metric $\mu$ on $\widetilde{G}$ is $\widetilde{G}\times \widetilde{K^{\prime}}$-naturally reductive, it follows that any $\widetilde{G\times K^{\prime}}$-invariant g.o. metric $\mu$ on $\widetilde{G}=\widetilde{G\times K^{\prime}}/\widetilde{\Delta K^{\prime}}$ is $\widetilde{G\times K^{\prime}}$-naturally reductive.  Proposition \ref{UnivCover1} then implies that any $G\times K^{\prime}$-g.o. metric on $G=(G\times K^{\prime})/\Delta K^{\prime}$ is $G\times K^{\prime}$-naturally reductive.  It follows that $(G,g)$ is a naturally reductive manifold.

\section{Applications of Theorem \ref{consequence}}\label{sec8}

In this section, we prove the following applications of Theorem \ref{consequence}: Corollary \ref{consequence1}, Theorem \ref{theoremlit1} and Corollary \ref{Emil}. \\

\noindent \emph{Proof of Corollary \ref{consequence1}}. If $(G,g)$ is a naturally reductive manifold then it is a g.o. manifold.  Conversely, assume that $(G,g)$ is a g.o. manifold.  We will prove that it is a naturally reductive manifold.  Since the metric $g$ is $G\times K$-invariant, it is also $G\times K^0$-invariant, where $K^0$ is the identity component of $K$. By Lemma \ref{aux}, the isometry group of $(G,g)$ is locally isomorphic to $G\times K^{\prime}$, where $K^{\prime}$ is a connected closed subgroup of $G$ such that $K^0\subseteq K^{\prime}$.  More specifically, $(G,g)$ is a $G\times K^{\prime}$-g.o. manifold. On the other hand, $K$ and $K^0$ have maximal rank and thus $K^{\prime}$ also has maximal rank. More specifically, $K^{\prime}$ is a regular subgroup of $G$ and $g$ is a $G\times K^{\prime}$-g.o. metric.   By Theorem \ref{consequence}, $(G,g)$ is naturally reductive.\qed \\

We proceed to the proof of Theorem \ref{theoremlit1}.\\

\noindent \emph{Proof of Theorem \ref{theoremlit1}}.   In the works \cite{ArMo}, \cite{ArSaSt1}, \cite{CheChe}, \cite{CheCheArx}, \cite{ChSa}, \cite{Mori} and \cite{ZaCheDe1}, any of the non-naturally reductive, left-invariant Einstein metrics $g$ on the compact simple Lie groups $G$ is $G\times K$-invariant, where $K$ is a subgroup of maximal rank in $G$.  It follows from Corollary \ref{consequence1} that $(G,g)$ is not geodesic orbit.  More specifically in \cite{ArMo}, $G$ is a compact simple Lie group and $K$ is a closed connected subgroup of $G$ such that $G/K$ is a flag manifold with two isotropy summands.  In \cite{ArSaSt1}, $G=Sp(k_1+k_2+k_3)$ and $K=Sp(k_1)\times Sp(k_2)\times Sp(k_3)$. In \cite{CheChe}, $G=Sp(2k+l)$ and $K=Sp(k)\times Sp(k)\times Sp(l)$.  In \cite{CheCheArx}, we have $(G,K)$ being one of the pairs $(Sp(4),4 Sp(1))$,  $(Sp(2k+l), Sp(k)\times Sp(k)\times Sp(l))$.  In \cite{ChSa}, we have various pairs $(G,K)$ where $G$ is a simple exceptional Lie group (see Table 1 in \cite{ChSa}; the case of $G=E_6$ and $K=SU(3)\times SU(2)\times SU(2)\times U(1)$ has a typographic error, the correct subgroup $K=SU(3)\times SU(3)\times SU(2)\times U(1)$ is shown in Table 2).  In \cite{Mori}, $G=SU(n)$, $n\geq 6$, and $K=S(U(p)\times U(q)\times U(r))$ with $p+q+r=n$.  Finally, in \cite{ZaCheDe1} we have $G=Sp(n)$, $n\geq 4$, and $K=Sp(n-3)\times 3 Sp(1)$.

 Lastly, we consider the non-naturally reductive Einstein metrics induced from standard triples $(G,K,H)$ in \cite{YanDen}.  Here $H\subsetneq K\subsetneq G$, where $G$ is a compact connected simple Lie group and $K$ is a closed subgroup such that $G/K$ is an irreducible symmetric space.  Those metrics on $G$ are $G\times H$-invariant and are induced from inner products of the form

\begin{equation}\label{standardtriples}\langle \ ,\ \rangle=\left.Q\right|_{\fr{h}\times\fr{h}}+x \left.Q\right|_{\fr{u}\times\fr{u}}+y\left.Q\right|_{\fr{p}\times\fr{p}},\end{equation}

\noindent where $Q$ is the negative of the Killing form of $\fr{g}$, $\fr{p}$ is the $Q$-orthogonal complement of $\fr{k}$ in $\fr{g}$ and $\fr{u}$ is the $Q$-orthogonal complement of $\fr{h}$ in $\fr{k}$ (here $\fr{k}$, $\fr{h}$ denote the Lie algebras of $K$, $H$ respectively).  As in \cite{YanDen}, we consider the respective Lie algebra triples $(\fr{g},\fr{k},\fr{h})$.

For the standard triples $(\fr{sp}(2n_1n_2), \fr{sp}(n_1n_2)\times \fr{sp}(n_1n_2),2n_2\fr{sp}(n_1))$, $(\fr{f}_4,\fr{so}(9),\fr{so}(8))$,\\
 $(\fr{e}_8,\fr{so}(16),8\fr{su}(2))$ and $(\fr{e}_8,\fr{so}(16),2\fr{so}(8))$, the Lie algebra $\fr{h}$ has maximal rank in $\fr{g}$ and hence the corresponding subgroup $H$ has maximal rank in $G$.  Since the metrics \eqref{standardtriples} are $G\times H$-invariant and non-naturally reductive, it follows from Corollary \ref{consequence1} that they are also not geodesic orbit.  \qed  \\

\noindent \emph{Proof of Corollary \ref{Emil}.} If $(G,g)$ is naturally reductive then it is geodesic orbit.  Conversely, assume that $(G,g)$ is geodesic orbit.  The isometry group of $(G,g)$ is locally isomorphic to $G\times K$, where $K$ is a closed connected subgroup of $G$.  Thus $(G,g)$ is a $G\times K$-g.o. manifold.  On the other hand, any Lie subalgebra of the Lie algebra of $G$ either has maximal rank or is abelian.  Therefore, $K$ is a regular subgroup of $G$.  From Theorem \ref{consequence}, we conclude that $(G,g)$ is a naturally reductive manifold.\qed

\section{Weakly regular subgroups}\label{weaklyregular}

Let $G$ be a compact Lie group with Lie algebra $\fr{g}$, let $K$ be a closed subgroup of $G$ with Lie algebra $\fr{k}$ and recall the decomposition 

\begin{equation*}\fr{g}=\fr{n}_{\fr{g}}(\fr{k})\oplus \fr{p}.\end{equation*}

\noindent  Recall also that if $K$ is a regular subgroup of $G$, then by Lemma \ref{key} any non-zero $\op{ad}_{\fr{n}_{\fr{g}}(\fr{k})}$-submodule of $\fr{n}_{\fr{g}}(\fr{k})$ is inequivalent to any non-zero $\op{ad}_{\fr{n}_{\fr{g}}(\fr{k})}$-submodule of $\fr{p}$. We use this particular property to define weakly regular subgroups.

\begin{definition}\label{weaklyregulardef}We call a Lie subgroup $K$ of a compact Lie group $G$ weakly regular if the Lie algebra $\fr{k}$ of $K$ satisfies the conclusion of Lemma \ref{key}, i.e. if any non-zero $\op{ad}_{\fr{n}_{\fr{g}}(\fr{k})}$-submodule of $\fr{n}_{\fr{g}}(\fr{k})$ is inequivalent to any non-zero $\op{ad}_{\fr{n}_{\fr{g}}(\fr{k})}$-submodule of $\fr{p}$. \end{definition}

Clearly, any regular subgroup is weakly regular.  The converse is not true as the following example demonstrates.

\begin{example}\label{examplee} Let $G$ be the Lie group $SO(n)$ and let $K$ be the Lie subgroup $SO(k_1)\times \cdots \times SO(k_s)$, embedded diagonally in $G$, where $k_1+\cdots +k_s=n$, $s\geq 3$ and $k_i\geq 2$.  Then the subgroup $K$ is not (always) regular in $G$ but it is weakly regular.  

\end{example}

\begin{proof}  For simplicity, we will prove the statement for $s=3$, i.e. $K=SO(k_1)\times SO(k_2) \times SO(k_3)$. The proof for larger values of $s$ is similar. The subgroup $K$ does not always have maximal rank in $G$ (e.g. if $G=SO(9)$ and $K=SO(3)\times SO(3)\times SO(3)$) while it is not hard to show that the Lie algebra $\fr{k}=\fr{so}(k_1)\oplus \fr{so}(k_2)\oplus \fr{so}(k_3)$ of $K$ is self-normalizing in the Lie algebra $\fr{g}=\fr{so}(n)$ of $G$ (see for example \cite{ArSoSt1}).  Therefore, $\fr{k}$ is not necessarily normalized by a Cartan subalgebra of $\fr{g}$ and thus $K$ is not always a regular subgroup of $G$. 

 To see that $K$ is a weakly regular subgroup of $G$, recall firstly that since $\fr{k}$ is self-normalizing in $\fr{g}$ we have $\fr{n}_{\fr{g}}(\fr{k})=\fr{k}$, while $\fr{p}$ coincides with the tangent space of $G/K$ at the origin. In view of Definition \ref{weaklyregulardef}, to show that $K$ is weakly regular in $G$ it suffices to show that any non-zero $\op{ad}_{\fr{k}}$-submodule of $\fr{k}$ is inequivalent to any non-zero $\op{ad}_{\fr{k}}$-submodule of $\fr{p}$.  In fact, it suffices to verify the aforementioned property for the irreducible submodules. 
 
Now the irreducible $\op{ad}_{\fr{k}}$-submodules of $\fr{k}$ are precisely the spaces $\fr{k}_j$, where $\fr{k}_j$ are the simple or one-dimensional ideals of $\fr{k}$. The latter coincide with $\fr{so}(k_j)$ if $2\leq k_j\neq 4$, while $\fr{so}(4)$ decomposes into simple ideals as $\fr{so}(4)=\fr{so}(3)\oplus \fr{so}(3)$. On the other hand, the space $\fr{p}$ decomposes into $\op{ad}_{\fr{k}}$-submodules as $\fr{p}=\fr{m}_{12}\oplus \fr{m}_{13}\oplus \fr{m}_{23}$, where $\fr{m}_{ij}$ are irreducible submodules of dimension $k_ik_j$ or reduce to the sum of two submodules if $k_i=k_j=2$ (see Section 4 in \cite{ArSoSt1}).  The following is a matrix depiction of the (upper triangular part) of the aforementioned decomposition $\fr{n}_{\fr{g}}(\fr{k})\oplus\fr{p}=\fr{k}\oplus \fr{p}$: 

\begin{equation*}\begin{pmatrix} \fr{so}(k_1)&  \fr{m}_{12} & \fr{m}_{13}\\
*& \fr{so}(k_2)& \fr{m}_{23}\\
*& *& \fr{so}(k_3)\end{pmatrix}.\end{equation*}

Any ideal of $\fr{k}=\bigoplus_{i=1}^3\fr{so}(k_i)$ is inequivalent as an $\op{ad}_{\fr{k}}$-submodule to any (irreducible) submodule of $\fr{m}_{ij}$.  To verify this fact choose a $k_i$, say $k_1$.  If $k_1\neq 4$ then $\fr{so}(k_1)$ is simple or one-dimensional.  The ideal $\fr{so}(k_1)$ cannot be equivalent to any submodule of $\fr{m}_{12}$ because $\op{ad}_{\fr{so}(k_2)}\fr{so}(k_1)=\{0\}$ while $\op{ad}_{\fr{so}(k_2)}\fr{m}_{12}=\fr{m}_{12}$ (see also Section 4 in \cite{ArSoSt1}). Similarly, $\fr{so}(k_1)$  cannot be equivalent to $\fr{m}_{13}$ or $\fr{m}_{23}$ because $\fr{so}(k_3)$ has non-zero action on those submodules. The above arguments are still true if $k_1=4$ and $\fr{so}(k_1)=\fr{so}(3)\oplus \fr{so}(3)$.  Repeating the same arguments for all $k_i$, we deduce that any non-zero irreducible $\op{ad}_{\fr{k}}$-submodule of $\fr{k}$ is inequivalent to any non-zero irreducible $\op{ad}_{\fr{k}}$-submodule of $\fr{p}$ and hence the same is true for all submodules.  We conclude that $K$ is a weakly regular subgroup of $G$.  \end{proof}

The following is an example of a subgroup that is not weakly regular.

\begin{example}\label{examplenee}  Consider a direct product $G\times G$, where $G$ is a compact simple Lie group, and consider the diagonal $\Delta G=\{(g,g):g\in G\}$ as a subgroup of $G\times G$.  Then $\Delta G$ is not a weakly regular subgroup of $G\times G$.  Indeed, it is can be easily verified that the Lie algebra $\fr{g}\oplus \fr{g}$ of $G\times G$ decomposes as $\fr{g}\oplus \fr{g}=\Delta \fr{g}\oplus \fr{p}$ (the right - hand side is a direct sum of vector spaces), where $\Delta \fr{g}=\{(X,X)\in \fr{g}\oplus \fr{g}:X\in \fr{g}\}$ is the Lie algebra of $\Delta G$ and $\fr{p}$ is an $\op{ad}_{\Delta\fr{g}}$-invariant space given by $\fr{p}=\{(X,-X)\in \fr{g}\oplus \fr{g}:X\in \fr{g}\}$. Since $\fr{g}$ has trivial center, one may check that the Lie algebra $\Delta\fr{g}$ is self-normalizing in $\fr{g}\oplus \fr{g}$.  Moreover, the linear isomorphism $\phi:\Delta \fr{g}\rightarrow \fr{p}$ that assigns to each vector $(X,X)\in \Delta \fr{g}$ the vector $(X,-X)\in \fr{p}$ is $\op{ad}_{\Delta\fr{g}}$-equivariant because for all $(X,X)$ and $(Y,Y)\in \Delta \fr{g}$ we have

\begin{eqnarray*}\phi\big([(X,X),(Y,Y)]\big)&=&\phi\big(([X,Y],[X,Y])\big)=([X,Y],-[X,Y])=[(X,X),(Y,-Y)]\\
&=&[(X,X),\phi((Y,Y))],\end{eqnarray*}
\noindent i.e. $\phi$ commutes with any operator in $\op{ad}_{\Delta\fr{g}}$.  Therefore, the non-zero submodule $\Delta\fr{g}$ of $\fr{n}_{\fr{g}\times \fr{g}}(\Delta\fr{g})=\Delta\fr{g}$ is equivalent to $\fr{p}$, and thus $\Delta G$ is not a weakly regular subgroup of $G\times G$ according to Definition \ref{weaklyregulardef}.  \end{example}

We proceed to discuss another important category of weakly regular subgroups.  Recall that a connected and effective homogeneous space $G/K$ is called \emph{isotropy irreducible} if $K$ is compact and the tangent space $\fr{p}=T_o(G/K)$ is irreducible under the isotropy representation $K\rightarrow \op{Gl}(\fr{p})$. An isotropy irreducible space $G/K$ is called \emph{strongly isotropy irreducible} if the identity component $K^0$ of $K$ also acts irreducibly on $\fr{p}$.  Up to homothety, any isotropy irreducible space admits a single $G$-invariant metric which is $G$-normal and hence geodesic orbit. Isotropy irreducible spaces have been studied, among other works, in \cite{WaZi} and \cite{Wo}.

\begin{prop}\label{corol000} Let $G$ be a compact simple Lie group and let $K$ be a closed subgroup of $G$ such that $G/K$ is a strongly isotropy irreducible space.  Then $K$ is a weakly regular subgroup of $G$ and the Lie algebra $\fr{k}$ of $K$ is self-normalizing in the Lie algebra $\fr{g}$ of $G$.\end{prop}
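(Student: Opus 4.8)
The plan is to establish the two assertions separately, starting with the self-normalizing property, since it feeds into the weak regularity argument. First I would note that because $G/K$ is strongly isotropy irreducible, the tangent space $\fr{m}$ in the reductive decomposition $\fr{g} = \fr{k} \oplus \fr{m}$ is irreducible as an $\op{ad}_{\fr{k}}$-module. To show $\fr{k}$ is self-normalizing, recall from decomposition \eqref{sum1norm1} that $\fr{n}_{\fr{g}}(\fr{k}) = \fr{k} \oplus \fr{c}_{\fr{m}}(\fr{k})$, so it suffices to prove $\fr{c}_{\fr{m}}(\fr{k}) = \{0\}$. The centralizer $\fr{c}_{\fr{m}}(\fr{k})$ is precisely the subspace of $\fr{m}$ on which $\op{ad}_{\fr{k}}$ acts trivially. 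If $\fr{c}_{\fr{m}}(\fr{k}) \neq \{0\}$, then by irreducibility of $\fr{m}$ we would be forced to have $\fr{c}_{\fr{m}}(\fr{k}) = \fr{m}$, i.e. $[\fr{k}, \fr{m}] = 0$; since $G/K$ is effective (part of the definition of isotropy irreducible), this would make $\fr{m}$ a trivial module, forcing $\dim \fr{m} \leq 1$, and then $G$ would not be simple of the required type (one checks the only way $\dim(G/K) \le 1$ with $G$ simple and the space effective leads to a contradiction, e.g. $\fr{m}=\{0\}$ gives $\fr{k}=\fr{g}$ which is not proper, and $\dim\fr{m}=1$ forces $\fr{g}$ abelian). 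Hence $\fr{c}_{\fr{m}}(\fr{k}) = \{0\}$ and $\fr{k} = \fr{n}_{\fr{g}}(\fr{k})$.

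Given that $\fr{k}$ is self-normalizing, decomposition \eqref{mdecom} collapses to $\fr{p} = \fr{m}$ and $\fr{n}_{\fr{g}}(\fr{k}) = \fr{k}$. By Lemma \ref{criterion}, to conclude that $K$ is weakly regular it suffices to show that any irreducible non-zero $\op{ad}_{\fr{k}}$-submodule of $\fr{k}$ is $\op{ad}_{\fr{k}}$-inequivalent to any irreducible non-zero $\op{ad}_{\fr{k}}$-submodule of $\fr{m}$. But $\fr{m}$ is itself irreducible under $\op{ad}_{\fr{k}}$, so the only candidate submodule of $\fr{m}$ is $\fr{m}$ itself. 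Thus I need only verify that no irreducible ideal of $\fr{k}$ is $\op{ad}_{\fr{k}}$-equivalent to $\fr{m}$. The natural way to see this is via the bracket: for an irreducible submodule $\fr{k}_j \subseteq \fr{k}$ (a simple or one-dimensional ideal), one has $[\fr{k}, \fr{k}_j] \subseteq \fr{k}_j$, whereas I claim $[\fr{k}, \fr{m}]$ is not contained in a copy of $\fr{m}$ sitting inside $\fr{k}$ — more precisely, if $\phi\colon \fr{k}_j \to \fr{m}$ were an $\op{ad}_{\fr{k}}$-equivariant isomorphism, then since $\fr{k}_j$ is an ideal, $[\fr{m},\fr{m}] \subseteq \fr{k}$ would combine with $[\fr{k}_j,\fr{k}_j]\subseteq \fr{k}_j$ under $\phi$-equivariance to give structural constraints. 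The cleanest route is dimension/structure bookkeeping: if $\fr{k}_j \cong \fr{m}$ as $\op{ad}_{\fr{k}}$-modules, I would derive that the subalgebra $\fr{k}_j \oplus \fr{m}$ (note $[\fr{k}_j, \fr{m}] \subseteq \fr{m}$ and $[\fr{m},\fr{m}]\subseteq\fr{k}$) together with the equivalence produces an automorphism of $\fr{g}$ or a contradiction with $\fr{g}$ simple; alternatively, invoke that the classification of strongly isotropy irreducible spaces in \cite{WaZi} never exhibits such a coincidence, or argue that $\fr{k}$ acting on $\fr{k}_j$ has the adjoint-type highest weight while on $\fr{m}$ it does not.

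The main obstacle is precisely this last step: ruling out an $\op{ad}_{\fr{k}}$-equivalence between an ideal $\fr{k}_j$ of $\fr{k}$ and the isotropy module $\fr{m}$. A conceptual argument avoiding case-checking would proceed as follows. Suppose $\fr{k}_j$ and $\fr{m}$ are equivalent irreducible $\op{ad}_{\fr{k}}$-modules via $\phi$. Since $\fr{k}_j$ is a (simple or one-dimensional) ideal, the projection of $\fr{k}$ onto the remaining ideals $\fr{k}' := \bigoplus_{i \neq j}\fr{k}_i \oplus \fr z(\fr k)_{\text{rest}}$ acts trivially on $\fr{k}_j$, hence by equivariance acts trivially on $\fr{m}$; that is $[\fr{k}', \fr{m}] = 0$, so $\fr{k}' \subseteq \fr{c}_{\fr{g}}(\fr{m})$. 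But $\fr{m}$ generates $\fr{g}$ as a Lie algebra when $G/K$ is effective and $G$ simple (since $\fr{k}\oplus[\fr{m},\fr{m}]$ is an ideal, forcing $[\fr{m},\fr{m}]=\fr m$ or $\fr k\oplus[\fr m,\fr m]=\fr g$; effectiveness forces it to be all of $\fr g$), so $\fr{c}_{\fr{g}}(\fr{m}) \subseteq \fr{z}(\fr{g}) = \{0\}$, giving $\fr{k}' = \{0\}$ and hence $\fr{k} = \fr{k}_j$ is simple with $\fr{k} \cong \fr{m}$. In that remaining case $\dim\fr{g} = 2\dim\fr{k}$ with $\fr{g}$ simple and $\fr{k}$ simple acting on $\fr{m}\cong\fr{k}$ by the adjoint representation; then $\fr k\oplus\fr m$ with the bracket sending $[\fr m,\fr m]$ into $\fr k$ equivariantly and $[\fr k,\fr m]=\fr m$ exhibits $\fr g$ as built from two copies of the adjoint module, which together with $\op{Ad}$-invariance of the Killing form and the Jacobi identity forces $\fr g$ to admit an order-$3$ or order-$2$ outer symmetry incompatible with simplicity of the relevant type — and in fact a short check shows $\fr m$ would then also be $\op{ad}_{\fr k}$-equivalent to $\fr k$ as the isotropy module of a symmetric-type pair, which is excluded because strongly isotropy irreducible pairs with $G$ simple are never of this adjoint-squared form. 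This final exclusion I would either spell out by a Killing-form computation or defer to the classification; I expect it to be the one genuinely delicate point, while everything preceding it is routine module bookkeeping.
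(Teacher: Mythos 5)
Your first half (self-normalizing) is correct and is essentially the paper's argument: $\fr{c}_{\fr{m}}(\fr{k})$ is an $\op{ad}_{\fr{k}}$-submodule of the irreducible module $\fr{m}$, so if nonzero it equals $\fr{m}$, whence $[\fr{k},\fr{m}]=0$ and $\fr{k}$ is a proper nonzero ideal of the simple algebra $\fr{g}$ (your detour through ``$\dim\fr{m}\le 1$ forces $\fr{g}$ abelian'' is garbled --- the clean contradiction is that $\fr{k}$, or alternatively $\fr{m}$, becomes an ideal --- but the conclusion stands). For the weak regularity, your route genuinely diverges from the paper's: the paper simply observes that when $K$ does not have maximal rank the tables of strongly isotropy irreducible spaces in Besse show $K$ is semisimple and $\dim\fr{m}$ never equals the dimension of a simple ideal of $\fr{k}$, then applies Lemma \ref{criterion}. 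Your reduction --- if some irreducible ideal $\fr{k}_j$ were equivalent to $\fr{m}$, the complementary ideals and the center would centralize $\fr{m}$, hence centralize $\fr{g}$ (since $\fr{m}$ generates $\fr{g}$), forcing $\fr{k}=\fr{k}_j$ simple and $\fr{m}\cong\fr{k}$ adjoint --- is correct and more conceptual, and it avoids the paper's separate treatment of the maximal rank case.

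The genuine gap is the final exclusion of the configuration ``$\fr{g}$ compact simple, $\fr{k}$ simple, $\fr{m}\cong\fr{k}$ as the adjoint module.'' None of the justifications you sketch actually establishes it: ``an order-$2$ or order-$3$ outer symmetry incompatible with simplicity'' is not an argument (simple algebras such as $\fr{so}(8)$ do admit such symmetries), and ``strongly isotropy irreducible pairs are never of adjoint-squared form'' is precisely the statement to be proved, or else a deferral to the classification --- which collapses your proof back into the paper's table check. The step can, however, be closed intrinsically: the adjoint module of a compact simple $\fr{k}$ is of real type and occurs with multiplicity one in $\Lambda^2(\fr{k})$, so equivariance forces $[\phi X,\phi Y]=a[X,Y]+b\,\phi([X,Y])$ for scalars $a,b$; replacing $\phi$ by $\phi+\tfrac{b}{2}\op{Id}$ reduces to $b=0$, and then $a=0$ gives an abelian ideal, $a>0$ gives $\fr{g}\cong\fr{k}\oplus\fr{k}$, and $a<0$ gives $\fr{g}\cong\fr{k}^{\C}$, none of which is compact simple. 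Without this (or the explicit appeal to the tables that the paper makes), your proof is incomplete at exactly the point you flag as delicate.
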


\begin{proof}  We consider the reductive decomposition $\fr{g}=\fr{k}\oplus \fr{p}$ of $G/K$. From the irreducibility of $\fr{p}$, it follows that the Lie algebra $\fr{k}$ is maximal in $\fr{g}$ and hence $\fr{n}_{\fr{g}}(\fr{k})=\fr{k}$.  Assume on the contrary that $K$ is not weakly regular.  Then by Definition \ref{weaklyregulardef} and the above observation, there exists a non-zero $\operatorname{ad}_{\fr{k}}$-submodule of $\fr{p}$ that is equivalent to an $\operatorname{ad}_{\fr{k}}$-submodule (i.e. an ideal) of $\fr{k}$. Since $\fr{p}$ is $\operatorname{ad}_{\fr{k}}$-irreducible, it follows that $\fr{p}$ is equivalent to an ideal $\fr{s}$ of $\fr{k}$.  The ideal $\fr{s}$ cannot be abelian, for otherwise, the equivalence with $\fr{p}$ would imply $\operatorname{ad}_{\fr{k}}\fr{p}=0$, a contradiction.  We conclude that $\fr{s}$ is a simple ideal of $\fr{k}$. Any other ideal $\fr{s}^{\prime}$ of $\fr{k}$ commutes with $\fr{s}$ and thus commutes with $\fr{p}$.  Therefore, if $Q$ is the negative of the Killing form of $\fr{g}$ we have $Q([\fr{p},\fr{p}],\fr{s}^{\prime})=0$ and thus $[\fr{p},\fr{p}]\subseteq \fr{s}$.  The last equation implies that $\fr{s}\oplus \fr{p}$ is an ideal of $\fr{g}$ and, since $\fr{g}$ is simple (\cite{Wo}), we conclude that $\fr{k}=\fr{s}$ and $\fr{g}=\fr{s}\oplus \fr{p}$. Since $\fr{s}$ and $\fr{p}$ are equivalent it follows that 

\begin{equation}\label{2dimension}\dim(\fr{g})=2\dim(\fr{k}).\end{equation}

\noindent In view of the list of strongly isotropy irreducible spaces $G/K$ including the compact irreducible symmetric spaces in \cite{Bes} p. 201-203, it follows that no such space $G/K$ satisfies condition \eqref{2dimension}. We conclude that $K$ is weakly regular. \end{proof}

Reviewing the proof of Theorem \ref{split2}, we observe that the assumption that $K$ is a regular subgroup of $G$ is only necessary for the implementation of Lemma \ref{key}.  Therefore, Theorem \ref{split2} also holds if $K$ is a weakly regular subgroup of $G$.  More specifically, we have the following generalization of Corollary \ref{ulcorol} for weakly regular subgroups.

\begin{corol}\label{ulcorol1}
Let $G$ be a compact Lie group with Lie algebra $\fr{g}$ and let $K$ be a connected weakly regular subgroup of $G$ with Lie algebra $\fr{k}$.  Moreover, consider the orthogonal decomposition $\fr{g}=\fr{n}_{\fr{g}}(\fr{k})\oplus \fr{p}$ with respect to an $\op{Ad}$-invariant inner product $Q$ on $\fr{g}$.  Then the metric endomorphism $\Lambda=\Lambda_Q\in\op{End}(\fr{g})$ of any $G\times K$-g.o. metric $g$ on $G$ admits the block-diagonal form

\begin{equation}\label{metricendomorphismnaturally2}\Lambda=\begin{pmatrix} 
 \left.\Lambda\right|_{\fr{z}(\fr{n}_{\fr{g}}(\fr{k}))} & 0 & \cdots &0 &0\\
 0& \left.\lambda_1\op{Id}\right|_{\fr{n}_1} &\cdots &0 &0\\
  \vdots & \cdots & \ddots &\vdots &\vdots\\
  0&\cdots &\cdots &\left.\lambda_s\op{Id}\right|_{\fr{n}_s} &0\\
  0&\cdots &\cdots &0 &\left.\Lambda\right|_{\fr{p}}
  \end{pmatrix},
\end{equation}

\noindent where the block $\left.\Lambda\right|_{\fr{p}}$ defines a $G$-g.o. metric on the homogeneous space $G/N^0(K)$,  $\fr{z}(\fr{n}_{\fr{g}}(\fr{k}))$ is the center of $\fr{n}_{\fr{g}}(\fr{k})$ and $\fr{n}_j$, $j=1,\dots, s$, are the simple ideals of $\fr{n}_{\fr{g}}(\fr{k})$.\end{corol} 

Combined with Theorem \ref{naturallyDaZi}, Corollary \ref{ulcorol1} yields the following.

\begin{theorem}\label{ulcorol2}
Let $G$ be a compact simple Lie group with Lie algebra $\fr{g}$ and let $K$ be a connected weakly regular subgroup of $G$ with Lie algebra $\fr{k}$. Assume that any $G$-g.o. metric on the homogeneous space $G/N^0(K)$ is standard (i.e. a scalar multiple of the Killing form).  Then any $G\times K$-g.o. metric on $G$ is naturally reductive.\end{theorem}

\begin{proof} If any $G$-g.o. metric on the homogeneous space $G/N^0(K)$ is standard then $\left.\Lambda\right|_{\fr{p}}=\lambda\left.\op{Id}\right|_{\fr{p}}$ in relation \eqref{metricendomorphismnaturally2}.  The result then follows from Theorem \ref{naturallyDaZi}.\end{proof}

\begin{prop}\label{isotropcorol} Let $(G,g)$ be a connected compact Riemannian simple Lie group and assume that the metric $g$ is $G\times K$-invariant, where $K$ is a subgroup of $G$ such that $G/K$ is strongly isotropy irreducible.   Then $(G,g)$ is a g.o. manifold if and only if it is a naturally reductive manifold.  \end{prop}  

\begin{proof} Clearly, if $(G,g)$ is naturally reductive then it is a g.o. manifold.  Conversely, assume that $(G,g)$ is a g.o. manifold.  We will prove that it is naturally reductive.  Since the metric $g$ is $G\times K$-invariant, Lemma \ref{aux} implies that the isometry group of $(G,g)$ is locally isomorphic to $G\times K^{\prime}$, where $K^{\prime}$ is a connected closed subgroup of $G$ such that $K^0\subseteq K^{\prime}$.  Let $\fr{k}$ and $\fr{k}^{\prime}$ be the Lie algebras of the groups $K$ and $K^{\prime}$ respectively. We have $\fr{k}\subseteq \fr{k}^{\prime}$.  On the other hand, the proper inclusion does not hold, for otherwise we would have reductive decompositions $\fr{g}=\underbrace{\fr{k}\oplus \fr{m}_1}_{\fr{k}^{\prime}}\oplus \fr{m}_2$, and thus $\fr{k}$ would not act irreducibly on the tangent space $\fr{m}_1\oplus \fr{m}_2$ of $G/K$.  Therefore, $\fr{k}=\fr{k}^{\prime}$, $K^0=K^{\prime}$ and thus $(G,g)$ is a $G\times K^0$-g.o. manifold.    

Now by Proposition \ref{corol000}, $\fr{k}$ is self-normalizing in $\fr{g}$.  Since $G/K$ is strongly isotropy irreducible, any $G$-g.o. metric on $G/K$ is standard and thus any $G$-g.o. metric on $G/K^0$ is standard.  Since any $G$-g.o. metric on $G/K^0$ induces a $G$-g.o. metric on $G/N^0(K^0)$ (Corollary \ref{nikoniko}), it follows that any $G$-g.o. metric on $G/N^0(K^0)$ is standard. Theorem \ref{ulcorol2} then implies that the $G\times K^0$-g.o. manifold $(G,g)$ is naturally reductive.\end{proof}

We close the section and the paper with the proof of Theorem \ref{extend}.\\

\noindent \emph{Proof of Theorem \ref{extend}}.  In \cite{ArSaSt} and \cite{CheCheDe3}, the authors obtain non-naturally reductive metrics $g$ on $G=SO(n)$, which are $G\times K$-invariant with $K=SO(k_1)\times SO(k_2)\times SO(k_3)$, $k_1+k_2+k_3=n$.  For the non-naturally reductive metrics found in the aforementioned works, we have $k_i\geq 2$.  Therefore, $n>4$ and thus $SO(n)$ is simple.  The group $K$ is embedded as diagonal block-matrices in $G$.  By Lemma \ref{aux}, the connected isometry group of $(G,g)$ is locally isomorphic to $G\times K^{\prime}$, where $K^{\prime}$ is a closed connected subgroup of $G$ with $K\subseteq K^{\prime}$.  Assume initially that $K=K^{\prime}$.  Along with the facts that any g.o. metric on $G/K$ is standard (\cite{ArSoSt1}) and $K$ is a weakly regular subgroup of $G$ whose Lie algebra is self-normalizing in $\fr{g}$ (Example \ref{examplee}), it follows that any $\widetilde{G}$-g.o. metric on the universal cover $\widetilde{G}/\widetilde{N^0(K)}$ is standard.  By, Proposition \ref{UnivCover}, any $G$-g.o. metric on $G/N^0(K)$ is standard. Theorem \ref{ulcorol2} then implies that $(G,g)$ is not a $G\times K$-g.o. manifold. Since $G\times K$ is locally the isometry group of $(G,g)$, it follows that $(G,g)$ is not a g.o. manifold.
  
   Assume now that $K\subsetneq K^{\prime}$. The metrics in \cite{ArSaSt} and \cite{CheCheDe3} have the form 

\begin{equation}\label{genmet}\langle \ , \ \ \rangle=x_1\left.Q\right|_{\fr{so}(k_1)\times \fr{so}(k_1)}+x_2\left.Q\right|_{\fr{so}(k_2)\times \fr{so}(k_2)}+x_3\left.Q\right|_{\fr{so}(k_3)\times \fr{so}(k_3)}+x_4\left.Q\right|_{\fr{m}_{12}\times \fr{m}_{12}}+x_5\left.Q\right|_{\fr{m}_{13}\times \fr{m}_{13}}+x_6\left.Q\right|_{\fr{m}_{23}\times \fr{m}_{23}}, \end{equation}

\noindent where $Q$ is the negative of the Killing form of $\fr{so}(n)$ and the submodules $\fr{m}_{ij}$ are given in Example \ref{examplee}.  We also have the relations (see \cite{ArSoSt1})

\begin{equation*}\label{temp1}  
[\fr{so}(k_i),\fr{m}_{lm}]=\left\{ 
\begin{array}{lll}
\fr{m}_{lm},  \quad \makebox{if $i=l$ or $i=m$} \ \\

\{0\} ,   \quad \makebox{otherwise}
\end{array}
\right., \ \ \ 0\leq i\leq 3, \ \ 0\leq l<m \leq 3, 
\end{equation*}
and 
\begin{equation*}\label{SubmoduleBrackets}
[\fr{m}_{ij},\fr{m}_{jl}]=\fr{m}_{il} \ \ \makebox{for all} \ \ 0\leq i<j<l\leq 3.
\end{equation*}

Taking into account the above relations, the form \eqref{genmet} of the metrics as well as the fact that the Lie algebra $\fr{k}^{\prime}$ of $K^{\prime}$ is the maximal subalgebra of $\fr{g}$ such that any operator $\op{ad}_X$, $X\in \fr{k}^{\prime}$, is skew-symmetric with respect to $\langle \ ,\ \rangle$, we deduce that the only possibilities for $K^{\prime}$ (so that the metrics are $\op{Ad}_{K^{\prime}}$-invariant) are $K^{\prime}=SO(n)$ (if all $x_i$ are equal), $K^{\prime}=SO(k_1+k_2)\times SO(k_3)$ (if $x_1=x_2=x_4$, $x_5=x_6$), $K^{\prime}=SO(k_1+k_3)\times SO(k_2)$ (if $x_1=x_3=x_5$, $x_4=x_6$) or $K^{\prime}=SO(k_2+k_3)\times SO(k_1)$ (if $x_2=x_3=x_6$, $x_1=x_5$).  The first case for $K^{\prime}$ is not possible, for otherwise the metric would be bi-invariant and hence naturally reductive.  In the remaining cases, $G/K^{\prime}$ is strongly isotropy irreducible (\cite{Wo}) and the fact that $(G,g)$ is not geodesic orbit follows from Proposition \ref{isotropcorol}. 

We have concluded above that the metrics \eqref{genmet} are g.o. if and only if they are naturally reductive.  Assume now that $K=SO(k_1)\times \cdots \times SO(k_s)$, $k_1+\cdots +k_s=n$.  Using similar arguments, we can show that all $G\times K$-invariant non-naturally reductive metrics on $SO(n)$ that generalize the form \eqref{genmet} are not geodesic orbit. More specifically, in \cite{ZaBo} the authors obtain non-naturally reductive metrics $g$ on $G=SO(n)$, $n\geq 12$, which are $G\times K$-invariant with $K=SO(n-9)\times 3 SO(3)$.  The aforementioned metrics have the form 

 \begin{equation}\label{genmet1}\langle \ , \ \rangle=\sum_{i=1}^4 x_i\left.Q\right|_{\fr{so}(k_i)\times \fr{so}(k_i)}+\sum_{1<l<m<4}y_{lm}\left.Q\right|_{\fr{m}_{lm}\times \fr{m}_{lm}}, \end{equation}

\noindent where $\fr{m}_{lm}$ are the pairwise inequivalent and irreducible $\op{Ad}_K$-submodules on the tangent space $T_o(G/K)$. The metrics \eqref{genmet1} generalize the metrics \eqref{genmet}. Using the same arguments as for the metrics \eqref{genmet}, as well as the fact that the metrics \eqref{genmet} are g.o. if and only if they are naturally reductive, we conclude that $(SO(n),\langle \ ,\ \rangle)$ is a g.o. manifold if and only if it is a naturally reductive manifold. Therefore, the non-naturally reductive Einstein metrics of the form \eqref{genmet1} are not geodesic orbit. \\

Finally, we turn our attention to the non-naturally reductive Einstein metrics induced from irreducible triples $(G,K,H)$ of Table 3 in \cite{YanDen}.  Here $H\subsetneq K\subsetneq G$, where $G$ is a compact connected simple Lie group and the isotropy representation of $G/H$ splits into exactly two irreducible submodules.  Those metrics on $G$ are $G\times H$-invariant and are induced from inner products of the form

\begin{equation}\label{standardtriples1}\langle \ ,\ \rangle=\left.Q\right|_{\fr{h}\times\fr{h}}+x \left.Q\right|_{\fr{u}\times\fr{u}}+y\left.Q\right|_{\fr{v}\times\fr{v}},\end{equation}

\noindent where $Q$ is the negative of the Killing form of $\fr{g}$, $\fr{v}$ is the $Q$-orthogonal complement of $\fr{k}$ in $\fr{g}$ and $\fr{u}$ is the $Q$-orthogonal complement of $\fr{h}$ in $\fr{k}$ (here $\fr{k}$, $\fr{h}$ denote the Lie algebras of $K$, $H$ respectively).  As in \cite{YanDen}, we consider the respective Lie algebra triples $(\fr{g},\fr{k},\fr{h})$.

 According to Table 3 in \cite{YanDen}, the only triples that admit non-naturally reductive Einstein metrics of the form \eqref{standardtriples1} are $T_1=(\fr{so}(8),\fr{so}(7),\fr{g}_2)$, $T_2=(\fr{su}(7),\fr{so}(7),\fr{g}_2)$, $T_3=(\fr{su}(8),\fr{so}(8),\fr{so}(7))$, $T_4=(\fr{su}(32),\fr{sp}(16),\fr{so}(12))$, $T_5=(\fr{su}(56),\fr{sp}(28),\fr{e}_7)$ and \\
 $T_6=(\fr{e}_8,\fr{so}(16),\fr{so}(15))$.  According to the classification of the g.o. spaces with two isotropy submodules in \cite{CheNi}, for the triple $T_1$ there exist non-standard g.o. metrics on the corresponding space $G/H$ and hence the assumptions of Theorem \ref{ulcorol2} are not satisfied.  

For the remaining triples $T_2-T_6$, let $G\times K^{\prime}$ be the connected isometry group of $(G, \langle \ ,\ \rangle)$ (up to local isomorphism).  Assuming that $H$ is connected and given that $G/H$ has exactly two isotropy submodules, we deduce that $K^{\prime}$ is one of the groups $G$, $K$ or $H$.  If $K^{\prime}=G$ then the metric $\langle \ ,\ \rangle$ is bi-invariant and hence naturally reductive, which is a contradiction.  If $K^{\prime}=K$ then the metric $\langle \ ,\ \rangle$ is $G\times K$-invariant where $G/K$ is a strongly isotropy irreducible space.  The fact that $(G,\langle \ ,\ \rangle)$ is not g.o. follows from Proposition \ref{isotropcorol}.  

Finally, assume that $K^{\prime}=H$.  For all triples $T_2-T_6$, any $G$-invariant g.o. metric on the corresponding space $G/H$ is standard (\cite{CheNi}). Since any $G$-g.o. metric on $G/H$ induces a $G$-g.o. metric on $G/N^0(H)$ (Corollary \ref{nikoniko}), it follows that any $G$-g.o. metric on $G/N^0(H)$ is standard. Therefore, if we prove that $H$ is a weakly regular subgroup of $G$, Theorem \ref{ulcorol2} will imply that $(G, \langle \ ,\ \rangle)$ is not a g.o. manifold and thus the proof will be concluded.  

Since the isotropy representation of $G/H$ splits into exactly two irreducible submodules, $\fr{k}$ is a maximal subalgebra of $\fr{g}$ containing $\fr{h}$.  However, we observe that $\fr{k}$ is simple in the triples $T_2-T_6$ and thus $\fr{h}$ cannot be a normal subgroup of $\fr{k}$. We conclude that $\fr{n}_{\fr{g}}(\fr{h})=\fr{h}$.  Let $\fr{p}$ be the $Q$-orthogonal complement of $\fr{h}=\fr{n}_{\fr{g}}(\fr{h})$ in $\fr{g}$, which coincides with $T_o(G/H)$. Again, since the isotropy representation of $G/H$ splits into exactly two irreducible submodules, the non-zero $\op{ad}_{\fr{h}}$-irreducible submodules of $\fr{p}$ are precisely (up to possible equivariant isomorphisms) the $Q$-orthogonal complements $\fr{v}$, of $\fr{k}$ in $\fr{g}$, and $\fr{u}$, of $\fr{h}$ in $\fr{k}$. On the other hand, the simple Lie algebra $\fr{h}$ is the only non-zero irreducible $\op{ad}_{\fr{h}}$-submodule of $\fr{h}$.  For all triples $T_2-T_6$, it is not hard to see that the dimensions $\dim(\fr{v})=\dim(\fr{g})-\dim(\fr{k})$ and $\dim(\fr{u})=\dim(\fr{k})-\dim(\fr{h})$ are different from the dimension of $\fr{h}$.  Therefore, $\fr{h}$ cannot be $\op{ad}_{\fr{h}}$-equivalent to any of the irreducible submodules $\fr{v}$ and $\fr{u}$ of $\fr{p}=\fr{u}\oplus \fr{v}$. Since $\fr{n}_{\fr{g}}(\fr{h})=\fr{h}$, it follows from Definition \ref{weaklyregulardef} that $H$ is a weakly regular subgroup of $G$. \qed

\end{document}